\newtheorem{thm}{Theorem}
\newtheorem{lem}[thm]{Lemma}
\newtheorem{cor}[thm]{Corollary}
\newtheorem{prop}[thm]{Proposition}
\theoremstyle{definition}
\newtheorem{defn}{Definition}
\renewcommand{\>}{\rangle}
\newcommand{\C}{\mathcal C}
\newcommand{\F}{\mathcal F}
\newcommand{\N}{\mathbb N}
\newcommand{\Z}{\mathbb Z}
\newcommand{\Syl}{\mathrm{Syl}}
\newcommand{\Br}{\mathrm{Br}}
\newcommand{\Aut}{\mathrm{Aut}}
\title{Block fusion systems of the alternating groups}
\date{\today}
\author{Martin Wedel Jacobsen}
\address{Institut for Matematiske Fag\\
  Universitetsparken 5\\
  DK--2100 København}
\email{martinw@math.ku.dk}
\urladdr{}
\thanks{Supported by the Danish National Research Foundation (DNRF)
through the Centre for Symmetry and Deformation.}
\begin{document}

\begin{abstract}
We describe a purely group-theoretic condition on an element $g$ of
a finite group $G$ which implies that $g$ has coefficient zero in
every central idempotent element of the group ring $RG$, provided
that $R$ is a ring of prime characteristic. We use this condition to
prove that the fusion system associated to a block of an alternating
group is always isomorphic to the group fusion system of an
alternating group.
\end{abstract}

\maketitle
\tableofcontents

\section{Introduction}

In the study of modular representations, the group ring does not
factor into a product of matrix rings, as is the case with complex
representations. Instead, one can assign to each irreducible factor
a fusion system which provides some information about the structure
of the factor. It is an open problem whether these block fusion
systems always occur as group fusion systems as well. A few cases
are known: the block fusion systems of symmetric groups are always
group fusion systems of symmetric groups, and a similar result holds
for the general linear groups over finite fields (see \cite[Theorem
7.2 and Remarks 7.4]{Kes07}).

In this paper, we provide a new proof of the result on symmetric
groups (Theorem \ref{sym-fusion}), and we extend this proof to cover
the alternating groups. We obtain that a block fusion system of an
alternating group is always isomorphic to a group fusion system of
an alternating group (Corollary \ref{alt-fusion-cor}).

In Section \ref{sec-prelim}, we review the definition of block
fusion systems and recall a few of their properties. We also prove a
number of simple lemmas that will be needed later. In Section
\ref{sec-idem}, we consider central idempotents of the group ring
$RG$, where $R$ is any ring of prime characteristic $p$. We describe
a group-theoretic condition on an element $g$ of $G$ that implies
that $g$ has coefficient zero in all these central idempotents. In
Section \ref{sec-symalt}, we apply this condition to the symmetric
and the alternating groups in order to derive restrictions on the
possible defect groups. We then derive some properties of the
centric subgroups of these defect groups and use these properties to
determine the possible fusion systems.

Notation: Throughout this paper, we work in the group ring $RG$
where $R$ is a ring of characteristic $p$ and $G$ is a finite group.
When $g$ is an element of $G$, $[g]$ is the conjugacy class of $g$,
and when $\C$ is a conjugacy class of $G$, $\Sigma\C$ is the sum of
the elements in $\C$, considered as an element of $RG$.

An element of $G$ is called $p$-regular if it has order not
divisible by $p$, and it is called a $p$-element if its order is a
power of $p$. For any element $g$ of $G$, $g_p$ and $g_{p'}$ denote
the $p$-part and the $p'$-part of $g$ (see Lemma
\ref{grp-elem-part}). The notation $\prod_{i=a}^b f(i)$ always means
the product $f(a)\cdot f(a+1) \cdots f(b)$.

For a finite set $M$, $S_M$ and $A_M$ are the symmetric and
alternating groups on $M$, and for a natural number $n$, $S_n$ and
$A_n$ are the symmetric and alternating groups on $\{1, \ldots,
n\}$. $C_n$ is the cyclic group of order $n$. When $P$ is a subgroup
of $G$, $\Aut_G(P)$ is the group of automorphisms of $P$ that arise
from conjugation with an element of $G$. Likewise, when $\F$ is a
fusion system on a group containing $P$, $\Aut_\F(P)$ is the
automorphism group of $P$ in $\F$. When $S$ is a Sylow $p$-subgroup
of $G$, $\F_S(G)$ is the fusion system on $S$ generated by $G$.

\bigskip

I would like to thank my advisor, Jesper Michael Møller, for
introducing me to this problem and for providing invaluable guidance
throughout my work on it.

\section{Preliminaries}\label{sec-prelim}

We review the definition and basic properties of blocks of a finite
group, Brauer pairs associated to a block, and the fusion system
associated to a block. For more detailed background, we refer to
\cite{AKO} and \cite{The95}.

Let $k$ be an algebraically closed field of characteristic $p$ and
let $G$ be a finite group. The group ring $kG$ may be decomposed as
a direct product of $k$-algebras; each primitive factor is called a
block of $G$. The unit element of a block is called the block
idempotent. It is a primitive central idempotent of $kG$. We record
a few basic facts about central idempotents:

\begin{prop}
The product of any two distinct block idempotents of $kG$ is 0. Any
central idempotent of $kG$ is the sum of distinct block idempotents.
The sum of all the block idempotents of $kG$ is 1.
\end{prop}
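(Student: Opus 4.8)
The plan is to work directly from the defining decomposition of the group ring as a direct product of algebras. By construction $kG \cong \prod_{i=1}^n B_i$, where the $B_i$ are the blocks (a finite product, since $G$ is finite), and under this isomorphism the block idempotent $e_i$, being the unit of $B_i$, is the tuple whose $i$-th entry is $1_{B_i}$ and whose other entries vanish. Once everything is transported to $\prod_{i=1}^n B_i$, all three assertions become statements about coordinates, which I would verify one factor at a time.

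For the first and third claims I would simply compute coordinatewise. If $i \neq j$, then in every coordinate the product $e_i e_j$ has at least one factor equal to $0$, so $e_i e_j = 0$. Summing the $e_i$ recovers the element with $1_{B_i}$ in each coordinate, which is exactly the identity $1$ of $\prod_{i=1}^n B_i$, hence of $kG$. Both of these parts are routine bookkeeping.

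The substantive part is the second claim, and here the key step is to understand the central idempotents of a single block. A central idempotent $e$ of $kG \cong \prod_{i=1}^n B_i$ is a tuple $(f_1, \ldots, f_n)$ in which each $f_i$ is a central idempotent of $B_i$, since multiplication, addition, and the passage to the center all split along the product. I would then argue that each block has only the trivial central idempotents $0$ and $1_{B_i}$: if some $f_i$ were central idempotent in $B_i$ with $f_i \neq 0$ and $f_i \neq 1_{B_i}$, then $f_i$ and $1_{B_i} - f_i$ would be nonzero orthogonal central idempotents of $kG$ summing to $e_i$, contradicting the primitivity of the block idempotent $e_i$. The only non-formal point is precisely this appeal to primitivity; everything else is coordinate-wise manipulation in the product. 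Granting it, each $f_i \in \{0, 1_{B_i}\}$, so $e = \sum_{i \in S} e_i$ where $S = \{\, i : f_i = 1_{B_i} \,\}$, exhibiting $e$ as a sum of distinct block idempotents and completing the proof.
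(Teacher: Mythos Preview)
Your argument is correct. The paper does not give a self-contained proof here: it simply invokes \cite[Corollary 4.2]{The95} applied to the commutative algebra $Z(kG)$, which is the standard statement that in a finite-dimensional commutative $k$-algebra the primitive idempotents are pairwise orthogonal, sum to $1$, and every idempotent is a sum of a subset of them. Your approach is slightly different in emphasis: rather than working inside $Z(kG)$ and quoting a general idempotent-lifting/decomposition lemma, you transport everything to the product $\prod_i B_i$ and argue coordinatewise, using primitivity of each $e_i$ to rule out nontrivial central idempotents in a single block. This is essentially an unpacking of the cited result in the specific situation at hand; it has the advantage of being self-contained, while the paper's route is shorter because it delegates the work to a reference.
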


\begin{proof}
Apply \cite[Corollary 4.2]{The95} to $Z(kG)$.
\end{proof}

\begin{lem}
Let $A$ be a finite-dimensional algebra over $k$, and let $\varphi:
A \to B$ be a surjective algebra homomorphism. The image of a
primitive central idempotent of $A$ is a central idempotent of $B$
(possibly zero), and every central idempotent of $B$ that lies in
$\varphi(Z(A))$ can be lifted to a central idempotent of $A$.
\end{lem}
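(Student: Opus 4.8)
The plan is to treat the two assertions separately, since they require quite different ideas. The first is a direct computation. If $e$ is a primitive central idempotent of $A$, then $\varphi(e)^2 = \varphi(e^2) = \varphi(e)$, so $\varphi(e)$ is an idempotent of $B$. To see that it is central, I would use surjectivity: any $b \in B$ equals $\varphi(a)$ for some $a \in A$, and then $\varphi(e)b = \varphi(ea) = \varphi(ae) = b\varphi(e)$ because $e$ is central in $A$. Primitivity plays no role here, and the image may of course be $0$. The same computation shows that $\varphi(Z(A)) \subseteq Z(B)$, so the hypothesis appearing in the second assertion is meaningful: every idempotent of $\varphi(Z(A))$ is automatically central in $B$.

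For the lifting statement, the key observation is that although $\ker\varphi$ need not be nilpotent---so one cannot simply invoke the standard result on lifting idempotents modulo a nilpotent ideal---the hypothesis $f \in \varphi(Z(A))$ allows me to reduce everything to the commutative algebra $Z(A)$. Writing $f = \varphi(z)$ with $z \in Z(A)$, I would consider the restriction $\psi := \varphi|_{Z(A)} \colon Z(A) \to C$, where $C := \varphi(Z(A))$. Both $Z(A)$ and $C$ are finite-dimensional commutative $k$-algebras, and it suffices to lift the idempotent $f \in C$ to an idempotent of $Z(A)$, since any such lift is automatically central in $A$.

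To finish I would invoke the structure theory of finite-dimensional commutative algebras: $Z(A)$ decomposes as a product $\prod_{i=1}^n Z_i$ of local algebras, with corresponding orthogonal idempotents $e_1, \ldots, e_n$ satisfying $\sum_i e_i = 1$. For each $i$, the ring $\psi(e_i)C = \psi(Z_i)$ is a quotient of the local ring $Z_i$, hence local or zero, so its only idempotents are $0$ and $\psi(e_i)$. Expanding $f = \sum_i f\psi(e_i)$ and noting that each $f\psi(e_i)$ is an idempotent of $\psi(e_i)C$, I conclude that $f\psi(e_i) \in \{0, \psi(e_i)\}$ for every $i$. Setting $S = \{i : f\psi(e_i) = \psi(e_i)\}$ and $e = \sum_{i \in S} e_i$, the element $e$ is a central idempotent of $A$ with $\varphi(e) = \sum_{i \in S} \psi(e_i) = f$, as required. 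The main obstacle is conceptual rather than computational: one must resist lifting modulo $\ker\varphi$ directly and instead recognize that the question concerns idempotents in $Z(A)$, where the decomposition into local factors reduces it to the triviality that a local ring has no idempotents other than $0$ and $1$; the hypothesis $f \in \varphi(Z(A))$ is precisely what permits this reduction.
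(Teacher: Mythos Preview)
Your proof is correct and follows the same underlying idea as the paper: both observe that surjectivity forces $\varphi(Z(A)) \subseteq Z(B)$, and both reduce the lifting question to the commutative subalgebra $Z(A)$. The paper then simply cites Th\'evenaz \cite[Theorem 3.2(b)]{The95} applied to $Z(A)$, whereas you supply a self-contained argument via the decomposition of the finite-dimensional commutative algebra $Z(A)$ into local factors. Your route has the advantage of being explicit and of making clear exactly why the hypothesis $f \in \varphi(Z(A))$ is needed; the paper's version is shorter but relies on the reader knowing the cited lifting theorem. Substantively the two arguments coincide.
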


\begin{proof}
Since $\varphi$ is surjective, $\varphi(Z(A))$ is contained in
$Z(B)$. This covers the first part. For the second part, apply
\cite[Theorem 3.2(b)]{The95} to $Z(A)$.
\end{proof}

We recall the definition of block fusion systems by means of Brauer
pairs. More detailed information can be found in \cite[Part
IV]{AKO}.

\begin{defn}
A Brauer pair of $G$ is a pair $(P,e)$ where $P$ is a $p$-subgroup
of $G$ and $e$ is a block idempotent of $kC_G(P)$.
\end{defn}

\begin{defn}
Let $Q \unlhd P$ be $p$-subgroups of $G$. The Brauer homomorphism
$\Br_{P/Q}: (kC_G(Q))^P \to kC_G(P)$ is given by mapping $\sum_{g
\in C_G(Q)} c_gg$ to $\sum_{g \in C_G(P)} c_gg$ (note the change in
the range of the sum). It is a $k$-algebra homomorphism.
\end{defn}

A Brauer pair whose first part is $P$ is also referred to as a
Brauer pair at $P$. Note that there is an obvious one-to-one
correspondence between Brauer pairs at the trivial group and block
idempotents of $G$. When $Q$ is the trivial group, the Brauer
homomorphism is denoted $\Br_P$.

\begin{defn}
Inclusion of Brauer pairs is defined as follows. We write $(Q,f)
\unlhd (P,e)$ if $Q \unlhd P$, $f \in (kC_G(Q))^P$, and
$\Br_{P/Q}(f)e = e$. We define the relation $\leq$ between Brauer
pairs to be the transitive closure of the relation $\unlhd$.
Conjugation of Brauer pairs is defined by $(P,e)^g = (P^g, e^g)$. A
Brauer pair $(P,e)$ is said to be associated to a block idempotent
$b$ of $G$ if $(1,b) \leq (P,e)$.
\end{defn}

The condition $\Br_{P/Q}(f)e = e$ deserves further explanation.
Since $\Br_{P/Q}$ is surjective, $\Br_{P/Q}(f)$ is a central
idempotent of $kC_G(Q)$; it therefore decomposes as a sum of
distinct block idempotents. The condition $\Br_{P/Q}(f)e = e$ then
means that $e$ appears in this decomposition.

\begin{prop}
Given a Brauer pair $(P,e)$ and a subgroup $Q$ of $G$ with $Q \leq
P$, there is a unique block idempotent $f$ of $C_G(Q)$ such that
$(Q,f) \leq (P,e)$. If in addition $Q \unlhd P$, then $(Q,f) \unlhd
(P,e)$. In particular, every Brauer pair of $G$ is associated to a
unique block idempotent. Additionally, for a given block $b$, any
two maximal Brauer pairs associated to $b$ are conjugate.
\end{prop}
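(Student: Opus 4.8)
The plan is to isolate the case $Q \unlhd P$, which carries the real content, and then bootstrap to the remaining clauses, which are formal consequences. Granting the normal case together with general uniqueness, everything else follows: taking $Q = 1$ in the uniqueness clause shows that $(1,b) \le (P,e)$ pins down $b$ uniquely, so every Brauer pair is associated to exactly one block; and if $Q \unlhd P$ then the $f$ produced in the normal case satisfies $(Q,f)\unlhd(P,e)$, hence $(Q,f)\le(P,e)$, so by uniqueness it coincides with the $\le$-minimal block, giving $(Q,f)\unlhd(P,e)$. I would therefore organize the argument as (i) the normal case, (ii) existence in general, (iii) uniqueness in general, (iv) conjugacy of maximal pairs.

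For (i), let $f_1,\dots,f_m$ be the block idempotents of $kC_G(Q)$. Since $Q \unlhd P$, conjugation by $P$ stabilizes $C_G(Q)$ and hence permutes the $f_i$; I group them into $P$-orbits and let $b_1,\dots,b_r \in (kC_G(Q))^P$ be the orbit sums, so $\sum_j b_j = 1$, and applying the unital surjective homomorphism $\Br_{P/Q}$ gives $\sum_j \Br_{P/Q}(b_j) = 1$ in $kC_G(P)$. The key computation is that $\Br_{P/Q}$ annihilates every orbit sum coming from a nontrivial orbit. Such a sum is a relative trace $\mathrm{Tr}_R^P(f) = \sum_{x\in[P/R]}{}^xf$ with $R<P$; writing $f = \sum_g c_g g$, the coefficient in $\mathrm{Tr}_R^P(f)$ of any $h \in C_G(P)$ is $\sum_{x\in[P/R]} c_{{}^{x^{-1}}h} = [P:R]\,c_h$, since $h$ commutes with $P$, and this vanishes because $[P:R]$ is divisible by the characteristic $p$. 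Hence only the $P$-stable blocks survive, their images are pairwise orthogonal idempotents summing to $1$, and multiplying by the primitive central idempotent $e$ expresses $e = \sum_i e\,\Br_{P/Q}(f_i)$ as a sum of orthogonal central idempotents, exactly one of which is $e$. This gives the unique $P$-stable block $f$ with $\Br_{P/Q}(f)e = e$, i.e.\ the unique $f$ with $(Q,f)\unlhd(P,e)$.

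For (ii), since $P$ is a $p$-group it is nilpotent, so $Q$ is subnormal in $P$: choosing a chain $Q = Q_0 \unlhd \dots \unlhd Q_n = P$ and applying (i) repeatedly, I descend from $e$ to produce blocks $f_n=e, \dots, f_0$ with $(Q_i,f_i)\unlhd(Q_{i+1},f_{i+1})$, whence $(Q,f_0)\le(P,e)$. For (iii) I would induct on $|P:Q|$. The base case $Q=P$ is antisymmetry: a chain realizing $(P,f)\le(P,e)$ has non-decreasing first components, hence is constant at $P$, and each normal step collapses since $\Br_{P/P}=\mathrm{id}$, forcing $f=e$. If $Q\unlhd P$ with $Q\ne P$, normal-case uniqueness applies once one knows that $(Q,f)\le(P,e)$ already forces $(Q,f)\unlhd(P,e)$, which uses the functoriality $\Br_{P/Q}=\Br_{P/R}\circ\Br_{R/Q}$ of the Brauer homomorphism along $Q\unlhd R\unlhd P$. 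If $Q\not\unlhd P$, nilpotency gives $Q<N_P(Q)<P$, and routing any inclusion $(Q,f)\le(P,e)$ through the intermediate pair at $N_P(Q)$ reduces uniqueness to two strictly smaller instances. \emph{The chain-independence in (iii) is the main obstacle}: making the routing through intermediate subgroups precise is exactly the coherence of the inclusion order, which I would secure via the associativity of the Brauer homomorphisms together with (i), as developed in \cite[Part IV]{AKO} and \cite{The95}.

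Finally, for the conjugacy of maximal Brauer pairs associated to $b$, I would show that $G$ acts transitively on the maximal elements of the poset of $b$-Brauer pairs. Their first components are defect groups of $b$, hence already $G$-conjugate; after conjugating so that two maximal pairs $(P,e),(P,e')$ share the first component, the second components are matched by an element of $N_G(P)$ through a control-of-fusion argument for $N_G(P,e)$. This is the Alperin--Brou\'e theorem, which I would invoke from \cite[Part IV]{AKO} rather than reprove the full structure theory; combined with (i)--(iii) it completes the proposition.
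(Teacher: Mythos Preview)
The paper does not prove this proposition: it is stated without proof as standard background, with the references \cite{AKO} and \cite{The95} mentioned at the head of Section~\ref{sec-prelim} serving as the source. Your proposal therefore goes well beyond what the paper does. The sketch you give is the standard one found in those references: your computation in (i) that nontrivial $P$-orbit sums of blocks of $kC_G(Q)$ are annihilated by $\Br_{P/Q}$ is correct and is the heart of the matter, and (ii) is routine. You rightly flag (iii), the coherence of the inclusion order, as the delicate point and defer to the literature for it; likewise for (iv).

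One small circularity worth noting in (iv): you write that the first components of maximal $b$-pairs ``are defect groups of $b$, hence already $G$-conjugate,'' but in this paper defect groups are \emph{defined} as the first components of maximal Brauer pairs, so their conjugacy is exactly part of the assertion rather than prior input. Since you ultimately invoke the Alperin--Brou\'e theorem from \cite{AKO} for the whole of (iv), this does no harm, but the sentence as written presupposes what it aims to conclude.
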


\begin{defn}
Let $b$ be a block of $G$. If $(P,e)$ is a maximal Brauer pair
associated to $b$, then $P$ is called a defect group of $b$. The
fusion system on $(P,e)$ associated to $b$ is a category defined as
follows. The objects are the subpairs of $(P,e)$; the maps from
$(Q,f)$ to $(Q',f')$ are the injective group homomorphisms $\varphi:
Q \to Q'$ with the property that there is a $g \in G$ such that
$\varphi(x) = x^g$ and $(Q,f)^g = (Q',f')$.
\end{defn}

\begin{prop}
The fusion system on $(P,e)$ associated to $b$ is a saturated fusion
system on $P$. Because maximal Brauer pairs associated to $b$ are
conjugate, the fusion system does not depend on the choice of
maximal Brauer pair.
\end{prop}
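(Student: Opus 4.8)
This is the foundational theorem on the local structure of a block, essentially due to Alperin--Broué and Broué--Puig and proved in full generality in \cite[Part IV]{AKO}; I would organize a proof as follows. The second sentence is the quick part. Given two maximal Brauer pairs associated to $b$, the previous proposition supplies a $g \in G$ with $(P,e)^g = (P',e')$. Conjugation by $g$ permutes all Brauer pairs associated to $b$ and preserves both the inclusion relation $\leq$ and the conjugation action of $G$, so it carries the category built on $(P,e)$ isomorphically onto the one built on $(P',e')$. Hence the fusion system is well defined up to isomorphism, independent of the chosen maximal pair.

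For the first sentence I would begin by making the morphism sets explicit. For each $Q \leq P$, let $e_Q$ be the unique block idempotent of $kC_G(Q)$ with $(Q,e_Q) \leq (P,e)$, which exists by the previous proposition. I would check that a homomorphism $Q \to Q'$ lies in the category precisely when it is induced by an element of the transporter $\{g \in G : (Q,e_Q)^g \leq (Q',e_{Q'})\}$; in particular $\Aut_\F(Q) \cong N_G(Q,e_Q)/C_G(Q)$, a group that visibly contains $\Aut_P(Q) = N_P(Q)C_G(Q)/C_G(Q)$. The uniqueness of the associated block in the previous proposition is exactly what makes these transporter descriptions compatible with composition of morphisms and with passage to subgroups, so that the category is genuinely a fusion system on $P$.

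The real content is the verification of the two saturation axioms. The extension (receptivity) axiom is the more routine one: given an isomorphism $\varphi : Q \to Q'$ in the category induced by some $g$, with $Q'$ fully normalized, I would extend $\varphi$ over $N_\varphi$ by lifting it through the chain of Brauer-pair inclusions $(Q,e_Q) \leq (N_\varphi, e_{N_\varphi})$ sitting below a maximal pair, which is possible once the identifications of the previous paragraph are in place. The Sylow axiom is where I expect the main obstacle. One must show that for fully normalized $Q$ the group $\Aut_P(Q)$ is a Sylow $p$-subgroup of $\Aut_\F(Q)$, equivalently that $N_P(Q)C_G(Q)$ contains a Sylow $p$-subgroup of $N_G(Q,e_Q)$. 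I would reduce this to a defect-group statement, namely that for fully normalized $Q$ the subgroup $N_P(Q)$ is a defect group of the $N_G(Q,e_Q)$-stable block $e_Q$ in $N_G(Q,e_Q)$. This in turn I would extract from a counting argument on the set of maximal Brauer pairs lying above $(Q,e_Q)$: these are all $N_G(Q,e_Q)$-conjugate by the relative form of the conjugacy theorem, and counting them modulo $p$ pins down the required Sylow relationship. Getting this counting to interact correctly with the meaning of ``fully normalized'' is the delicate step, and it is where the Brauer homomorphism and the theory of defect groups do the essential work.
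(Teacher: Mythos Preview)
The paper does not prove this proposition at all. It is one of the background facts collected in Section~\ref{sec-prelim}, stated without proof and implicitly deferred to \cite[Part~IV]{AKO} via the sentence ``More detailed information can be found in \cite[Part~IV]{AKO}.'' So there is no ``paper's own proof'' to compare against; your proposal stands as an independent sketch of the standard argument.

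As such a sketch it is broadly accurate and follows the Alperin--Brou\'e / Brou\'e--Puig line that the reference \cite{AKO} develops: conjugacy of maximal pairs gives well-definedness, the transporter description $\Aut_\F(Q) \cong N_G(Q,e_Q)/C_G(Q)$ is the right object to work with, and the Sylow axiom is indeed the substantial step, handled by showing $N_P(Q)$ is a defect group of $e_Q$ regarded as a block of $N_G(Q,e_Q)$ when $(Q,e_Q)$ is fully normalized. Two places where your outline is thinner than an actual proof: first, the extension axiom is not as routine as you suggest---one really needs the uniqueness of contained Brauer pairs together with a Frattini-type argument inside $N_G(Q,e_Q)$ to manufacture the extending element, and your phrase ``lifting it through the chain of Brauer-pair inclusions'' does not yet name that mechanism; second, your counting-mod-$p$ argument for the Sylow axiom is gestured at rather than stated, and in the literature this step is usually done via the relative trace map and Brauer's characterization of defect groups rather than a direct orbit count. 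None of this is wrong, but if you were to flesh the sketch into a proof you would want to make those two points explicit.
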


For the definition and properties of saturated fusion systems, we
refer to \cite[Part I]{AKO}. For this article, we will only need the
following two facts.

\begin{lem}
Let $P$ be a $p$-group, and let $\F$ be a saturated fusion system on
$P$. Then $\Aut_P(P)$ is a Sylow $p$-subgroup of $\Aut_\F(P)$.
\end{lem}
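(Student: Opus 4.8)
The plan is to derive the statement directly from the saturation axioms applied to the object $P$ of $\F$ itself; the content of the lemma is essentially built into the definition of saturation, so the only real work is to check that $P$ falls under the relevant axiom and to unwind what that axiom asserts for $Q = P$.

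First I would record that $\Aut_P(P)$ is a $p$-group, so that it is at least a $p$-subgroup of $\Aut_\F(P)$: by definition $\Aut_P(P) = N_P(P)/C_P(P) = P/Z(P)$, which is a quotient of the $p$-group $P$. It then remains only to show that $\Aut_P(P)$ has maximal $p$-power order in $\Aut_\F(P)$, i.e. that it is Sylow. Next I would observe that $P$ is fully normalized in $\F$. Indeed, any subgroup of $P$ that is $\F$-isomorphic to $P$ is the image of an injective homomorphism $P \to P$, hence a subgroup of $P$ of order $|P|$, hence equal to $P$. Thus $P$ is the unique member of its $\F$-conjugacy class and so trivially maximizes $|N_P(-)|$ over that class, which is exactly the condition of being fully normalized.

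Finally I would invoke the Sylow axiom in the definition of a saturated fusion system (as in \cite[Part I]{AKO}): every fully normalized subgroup $Q$ is fully automized, meaning that $\Aut_P(Q)$ is a Sylow $p$-subgroup of $\Aut_\F(Q)$. Applying this to the fully normalized subgroup $Q = P$, and noting that $N_P(P) = P$ so that $\Aut_P(P) = \mathrm{Inn}(P)$ is precisely the group named in the statement, yields the conclusion.

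I do not expect a genuine obstacle here: the argument is essentially definitional. The only points requiring care are to match the exact formulation of saturation being cited (confirming that the term ``fully automized'' denotes precisely the Sylow condition asserted here) and to carry out the small but necessary verification that $P$ is fully normalized, since the Sylow axiom applies only to such subgroups.
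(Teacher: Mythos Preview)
Your proposal is correct and follows essentially the same idea as the paper: both arguments reduce to the observation that $P$ is the unique member of its $\F$-conjugacy class (since no proper subgroup of $P$ is isomorphic to $P$), hence $P$ is fully automized, and then unpack the definition of fully automized. The only cosmetic difference is that you route through ``fully normalized $\Rightarrow$ fully automized'', whereas the paper cites the formulation of saturation in \cite[Definition I.2.2]{AKO} asserting directly that every subgroup is $\F$-conjugate to a fully automized one.
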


\begin{proof}
By \cite[Definition I.2.2]{AKO}, $P$ is $\F$-conjugate to a fully
automized subgroup of $P$, which must be $P$ itself, since no proper
subgroup of $P$ is isomorphic to $P$. The required property then
follows from the definition of fully automized.
\end{proof}

\begin{thm}[Alperin's fusion theorem, weak form]\label{alperin}
Let $P$ be a $p$-group, and let $\F$ be a saturated fusion system on
$P$. Then $\F$ is uniquely determined by the groups $\Aut_\F(Q)$
where $Q$ runs over the centric subgroups of $P$.
\end{thm}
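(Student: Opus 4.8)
The plan is to reduce the statement to the assertion that every $\F$-isomorphism between subgroups of $P$ is a composite of restrictions of $\F$-automorphisms of centric subgroups, and then to prove that assertion by induction on the index $[P:Q]$. For the reduction, observe that a morphism in $\F$ is completely determined by its underlying injective homomorphism into $P$, so it suffices to recover every set $\Hom_\F(Q,P)$. Each such morphism factors as an $\F$-isomorphism $Q\to\varphi(Q)$ followed by the inclusion $\varphi(Q)\hookrightarrow P$, and the inclusion is the restriction of $\mathrm{id}_P\in\Aut_\F(P)$. Since $C_P(P)=Z(P)\le P$, the group $P$ is itself $\F$-centric, so the inclusions cost nothing and the whole theorem comes down to factoring $\F$-isomorphisms through centric automorphisms.

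For the induction, the base case $Q=P$ is immediate, since then $\varphi\in\Aut_\F(P)$ and $P$ is centric. For the inductive step, given an $\F$-isomorphism $\varphi\colon Q\to Q'$ with $Q<P$, I would first normalize the target: choose a representative $R$ of the $\F$-conjugacy class of $Q$ that is both fully automized and receptive (such a representative exists by the saturation axioms, see \cite[Part I]{AKO}) and fix an $\F$-isomorphism $\sigma\colon Q'\to R$. Writing $\varphi=\sigma^{-1}(\sigma\varphi)$ and noting that the inverse of a composite of restrictions of automorphisms is again such a composite, it suffices to factor an arbitrary $\F$-isomorphism $\psi\colon Q\to R$. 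Here the key maneuver uses both halves of saturation. Since $\Aut_P(Q)$ is a $p$-group, $\psi\Aut_P(Q)\psi^{-1}$ is a $p$-subgroup of $\Aut_\F(R)$; as $R$ is fully automized, $\Aut_P(R)$ is a Sylow $p$-subgroup of $\Aut_\F(R)$ (the lemma preceding this theorem is the instance $R=P$), so some $\chi\in\Aut_\F(R)$ conjugates $\psi\Aut_P(Q)\psi^{-1}$ into $\Aut_P(R)$. Replacing $\psi$ by $\chi\psi$ forces $N_{\chi\psi}=N_P(Q)$, whence receptivity of $R$ extends $\chi\psi$ to an $\F$-isomorphism defined on $N_P(Q)$. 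Because $Q<P$ forces $Q<N_P(Q)$ in a $p$-group, this extension lives at strictly smaller index and is handled by the inductive hypothesis, so $\chi\psi$ is a composite of restrictions of centric automorphisms.

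It remains to absorb the leftover factor $\chi\in\Aut_\F(R)$. If $R$ is centric there is nothing to do, since $\chi$ is already an automorphism of a centric subgroup. If $R$ is not centric then (as $R$ is fully automized and receptive) $C_P(R)\not\le R$, so $RC_P(R)$ strictly contains $R$; since every element of $C_P(R)$ centralizes $R$, the subgroup $N_\chi$ contains $RC_P(R)$, and receptivity again extends $\chi$ to strictly smaller index, where the inductive hypothesis applies. Combining these, $\psi=\chi^{-1}(\chi\psi)$ is a composite of restrictions of $\F$-automorphisms of centric subgroups, and the induction closes. I expect the main obstacle to be exactly this bookkeeping inside the saturation axiom: one must orchestrate the Sylow argument (fully automized) and the extension property (receptive) so that \emph{every} factor produced is either an automorphism of a centric subgroup or a map defined on a subgroup of strictly larger order, so that the downward induction on $[P:Q]$ genuinely terminates.
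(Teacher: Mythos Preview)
Your argument is correct and essentially reproduces the standard direct proof of Alperin's fusion theorem from the saturation axioms: normalize to a fully automized, receptive target, use the Sylow property to arrange $N_{\chi\psi}=N_P(Q)$, extend by receptivity to the strictly larger normalizer, and absorb the leftover $\chi$ either because $R$ is already centric or by extending over $RC_P(R)>R$. The bookkeeping you flag as the main obstacle is handled cleanly; in particular, the observation that $R\le N_\chi$ and $C_P(R)\le N_\chi$ (since $c_g$ is trivial for $g\in C_P(R)$) is exactly what is needed.

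The paper, however, does not argue this way. Its proof is a two-line reduction to references: it invokes \cite[Theorem I.3.5]{AKO}, which is the sharper form of Alperin's theorem asserting that $\F$ is generated by $\Aut_\F(Q)$ for $Q$ running over the $\F$-\emph{essential} subgroups together with $\Aut_\F(P)$, and then quotes \cite[Proposition I.3.3(a)]{AKO} to observe that every $\F$-essential subgroup is $\F$-centric, hence centric in $P$; since $P$ is centric in itself, the centric subgroups already contain all the essential ones and $P$. So the paper treats the result as an immediate corollary of the essential-subgroup version, while you rebuild the theorem from the saturation axioms. Your route is more self-contained and avoids introducing the notion of essential subgroup at all; the paper's route is much shorter on the page but outsources the real work to the cited theorem, whose proof is essentially the induction you wrote out (with the extra refinement that one only needs those centric $Q$ for which $\mathrm{Out}_\F(Q)$ has a strongly $p$-embedded subgroup).
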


\begin{proof}
By \cite[Theorem I.3.5]{AKO}, $\F$ is determined by the groups
$\Aut_\F(Q)$ where $Q$ runs over the $\F$-essential subgroups of
$P$, together with the group $\Aut_\F(P)$. By \cite[Proposition
I.3.3(a)]{AKO}, an $\F$-essential subgroup of $P$ is $\F$-centric in
$P$, so by definition it is also centric in $P$. Additionally, $P$
itself is also centric in $P$.
\end{proof}

We will make use of the following simple observation regarding
defect groups.

\begin{lem}\label{def-syl}
Let $e$ be a block idempotent of $G$, and let $P$ be a defect group
of $e$. Then $e$ contains an element $a \in G$ such that $P$ is a
Sylow $p$-subgroup of $C_G(a)$.
\end{lem}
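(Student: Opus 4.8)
The plan is to read the defect group $P$ off the expansion of $e$ in the basis of conjugacy class sums and to exploit the Brauer homomorphisms. Write $e=\sum_\C a_\C\,\Sigma\C$, the sum running over the conjugacy classes of $G$ with coefficients $a_\C\in k$; saying that $e$ contains $a$ means precisely $a_{[a]}\neq 0$. The one elementary observation driving everything is that for any $p$-subgroup $Q$ one has $\Br_Q(\Sigma\C)=\sum_{g\in\C\cap C_G(Q)}g$, so distinct classes are sent to elements of $kC_G(Q)$ with disjoint supports. Hence $\Br_Q(e)=\sum_\C a_\C\Br_Q(\Sigma\C)$ has, at each $g\in C_G(Q)$, the coefficient $a_{[g]}$; in particular $\Br_Q(e)\neq 0$ if and only if some class $\C$ with $a_\C\neq 0$ meets $C_G(Q)$.

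First I would produce a candidate $a$. Since $P$ is a defect group, there is a block $e'$ of $C_G(P)$ with $(1,e)\leq(P,e')$; as $1\unlhd P$, the Proposition on inclusion of Brauer pairs upgrades this to $(1,e)\unlhd(P,e')$, which by definition means $\Br_P(e)\,e'=e'$ and hence $\Br_P(e)\neq 0$. By the observation above there is then a class $\C=[a]$ with $a_\C\neq 0$ meeting $C_G(P)$; choosing $a\in\C\cap C_G(P)$ yields an element that $e$ contains and that satisfies $P\leq C_G(a)$.

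It remains to promote $P\leq C_G(a)$ to the assertion that $P$ is \emph{Sylow} in $C_G(a)$, and this is where the maximality of the defect group must enter; I expect it to be the crux. Let $Q$ be a Sylow $p$-subgroup of $C_G(a)$ with $P\leq Q$. Since $a\in Z(C_G(a))$ it commutes with all of $Q$, so $a\in C_G(Q)$; as $a_{[a]}\neq 0$, the observation gives $\Br_Q(e)\neq 0$. Now the image under the surjective algebra homomorphism $\Br_Q\colon(kG)^Q\to kC_G(Q)$ of the central idempotent $e$ is itself a central idempotent of $kC_G(Q)$ (this is exactly what the proof of the surjective-homomorphism lemma yields), so $\Br_Q(e)$ is a sum of distinct blocks of $C_G(Q)$; picking one such block $f$ gives $\Br_Q(e)f=f$, i.e.\ $(1,e)\unlhd(Q,f)$. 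Thus $(Q,f)$ is a Brauer pair associated to $e$, and extending it to a maximal one shows that $Q$ is contained in the first component of a maximal Brauer pair associated to $e$, which is a defect group and hence conjugate to $P$; therefore $|Q|\leq|P|$. Together with $P\leq Q$ this forces $P=Q$, so $P$ is a Sylow $p$-subgroup of $C_G(a)$, as required.
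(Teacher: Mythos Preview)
Your proof is correct and follows essentially the same approach as the paper's: pick $a$ with nonzero coefficient in $\Br_P(e)$, then use the fact that a Brauer pair at $Q$ associated to $e$ can only exist when $|Q|\le|P|$ to rule out any larger $p$-subgroup of $C_G(a)$. The paper phrases the second step contrapositively (for $S\supsetneq P$ one has $\Br_S(e)=0$, hence $a\notin C_G(S)$), but the content is identical.
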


\begin{proof}
Since $P$ is a defect group of $e$, we have $\Br_P(e) \neq 0$, so we
may choose an $a \in G$ with nonzero coefficient in $\Br_P(e)$. Then
$a$ also has nonzero coefficient in $e$, and we have $a \in C_G(P)$.
This obviously implies $P \subseteq C_G(a)$. Now let $S$ be a
$p$-subgroup of $G$ that properly contains $P$. Since $P$ is a
defect group of $e$ and all defect groups of $e$ are conjugate,
there can be no Brauer pairs at $S$ associated to $e$. Then
$\Br_S(e) = 0$, so we have $a \not\in C_G(S)$. This implies $S
\not\subseteq C_G(a)$, so $P$ is a maximal $p$-subgroup of $C_G(a)$.
It is then a Sylow $p$-subgroup of $C_G(a)$.
\end{proof}

We also collect a few group-theoretic lemmas we will need later.

\begin{lem}\label{grp-elem-part}
Let $G$ be a finite group and $p$ a prime. For any element $g$ of
$G$, there are unique elements $g_p$ and $g_{p'}$ of $G$, such that
$g_p$ is a $p$-element, $g_{p'}$ is $p$-regular, and $g = g_pg_{p'}
= g_{p'}g_p$.
\end{lem}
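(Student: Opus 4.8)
The plan is to build $g_p$ and $g_{p'}$ explicitly as powers of $g$, so that they automatically commute, and then to recover any other valid decomposition by the very same power formula, which forces uniqueness.

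First I would let $n$ be the order of $g$ and write $n = p^a m$ with $p \nmid m$. Since $\gcd(p^a, m) = 1$, Bézout's identity provides integers $s, t$ with $sp^a + tm = 1$. I would then define $g_p = g^{tm}$ and $g_{p'} = g^{sp^a}$. Both are powers of $g$, so they commute with one another, and $g_p g_{p'} = g_{p'} g_p = g^{sp^a + tm} = g$. The relation $sp^a + tm = 1$ also shows $tm \equiv 1 \pmod{p^a}$ and $sp^a \equiv 1 \pmod{m}$, whence $\gcd(t,p) = 1$ and $\gcd(s,m) = 1$. Computing the order of a power inside the cyclic group $\langle g \rangle$ of order $n$ then gives that $g_p$ has order $p^a$, so it is a $p$-element, and $g_{p'}$ has order $m$, so it is $p$-regular. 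This settles existence.

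For uniqueness, suppose $g = xy = yx$ with $x$ a $p$-element and $y$ a $p$-regular element. Because $x$ and $y$ commute, $(xy)^k = x^k y^k$ for every integer $k$; and since $|x|$ is a power of $p$ while $|y|$ is coprime to $p$, the commuting product satisfies $|g| = |x|\,|y|$, which together with $n = p^a m$ forces $|x| = p^a$ and $|y| = m$. Applying the power formula, $g_p = g^{tm} = x^{tm} y^{tm}$; here $y^{tm} = 1$ because $m = |y|$ divides $tm$, while $x^{tm} = x$ because $tm \equiv 1 \pmod{p^a}$ and $|x| = p^a$. Hence $g_p = x$, and the symmetric computation with $g^{sp^a}$ gives $g_{p'} = y$.

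The only delicate point is the order bookkeeping in the uniqueness step: one must confirm that a commuting product of a $p$-element and a $p'$-element has order exactly the product of the two orders, so that the exponents $p^a$ and $m$ occurring in the Bézout relation genuinely coincide with $|x|$ and $|y|$. Once that is in place, everything reduces to routine power computations inside $\langle g \rangle$.
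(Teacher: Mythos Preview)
Your proof is correct and follows essentially the same route as the paper: both use B\'ezout's identity on the factorization $|g| = p^a m$ to define $g_p$ and $g_{p'}$ as suitable powers of $g$, and both recover uniqueness by applying those same exponents to an arbitrary decomposition. You are, if anything, slightly more explicit than the paper in flagging that the uniqueness step requires $|x| = p^a$ and $|y| = m$, which the paper simply asserts.
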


\begin{proof}
Let $g$ have order $p^km$ where $p \nmid m$, and let $x$ and $y$ be
integers such that $xm+yp^k = 1$. Set $g_p = g^{xm}$ and $g_{p'} =
g^{yp^k}$; then $g = g_pg_{p'} = g_{p'}g_p$. Further, $x$ and $p^k$
are coprime, so $g_p$ has order $p^k$; similarly, $g_{p'}$ has order
$m$ since $m$ and $y$ are coprime. This proves the existence of
$g_p$ and $g_{p'}$.

For uniqueness, suppose that we are given $g_p$ and $g_{p'}$ with
the desired properties. Then we have $g^{xm} = (g_pg_{p'})^{xm} =
g_p^{xm}g_{p'}^{xm} = g_p^{xm}$, since $g_p$ and $g_{p'}$ commute
and $g_{p'}$ has order $m$. As $g_p$ has order $p^k$, we get
$g_p^{xm} = g_p^{xm+yp^k} = g_p$. Thus we must have $g_p = g^{xm}$.
We similarly deduce $g_{p'} = g^{yp^k}$.
\end{proof}

The elements $g_p$ and $g_{p'}$ may be called the $p$-part and the
$p'$-part of $g$, respectively.

\begin{lem}\label{exp-reg}
Let $G$ be a finite group, and let $p$ be a prime. Then there exists
a number $n \in \N$ such that $g^{p^n} = g_{p'}$ for any $g \in G$.
\end{lem}

\begin{proof}
Write $|G| = p^k \cdot m$ with $m$ not divisible by $p$. Choose $n$
such that $p^n$ is congruent to 1 modulo $m$ and $n \geq k$. Then if
$g$ is a $p$-element, its order is a divisor of $p^k$, so $g^{p^n}$
is the identity. If $g$ is a $p$-regular element, its order is a
divisor of $m$, so $g^{p^n} = g^1 = g$. For an arbitrary $g \in G$,
we then have
\[
g^{p^n} = (g_p \cdot g_{p'})^{p^n} = g_p^{p^n}\cdot g_{p'}^{p^n} = g_{p'}
\]
\end{proof}

We call a number $q = p^n$ a \emph{$p$-regular exponent} of $G$ if
it satisfies the conditions of Lemma \ref{exp-reg}.

\begin{lem}\label{syl-konj}
Let $G$ be a finite group, $S$ a Sylow $p$-subgroup of $G$, and $a$
a $p$-element of $G$. Then the group $\<S^{a^k} \mid k \in \Z\>$
contains $a$.
\end{lem}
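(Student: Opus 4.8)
The plan is to set $H = \<S^{a^k} \mid k \in \Z\>$ and to produce a slightly larger group in which a Sylow argument forces $a$ into $H$. The first and most important observation is that conjugation by $a$ sends each generator $S^{a^k}$ to $S^{a^{k+1}}$, so it merely permutes the generating set of $H$. Hence $a$ normalizes $H$, and $K = H\<a\>$ is a genuine subgroup of $G$ with $H \unlhd K$. Since $K/H$ is generated by the image of the $p$-element $a$, it is a cyclic $p$-group. This is the step I expect to be the crux: everything afterward is standard, but the whole argument hinges on noticing that the cyclic action of $a$ on the conjugates $S^{a^k}$ makes $H$ invariant under $a$, so that one can pass to the quotient $K/H$.

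Next I would use that $S = S^{a^0}$ is itself one of the generators of $H$, so $S \leq H \leq K$. Because $S$ is a Sylow $p$-subgroup of $G$ and $S \leq K \leq G$, the $p$-part of $|K|$ coincides with that of $|G|$; thus $S$ is in fact a Sylow $p$-subgroup of $K$ as well. This divisibility point is routine but worth recording explicitly.

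Finally I would invoke the normality of $H$ in $K$. Since $H \unlhd K$ and $S$ is Sylow in $K$, the image $SH/H$ is a Sylow $p$-subgroup of the quotient $K/H$. But $S \leq H$, so this image is trivial, which forces $K/H$ to have order prime to $p$. As $K/H$ is at the same time a $p$-group, it must be trivial, i.e. $K = H$, and therefore $a \in H$, as required. The only genuinely creative ingredient is thus the initial normalization observation; the rest follows from the standard fact that a Sylow subgroup maps onto a Sylow subgroup of a quotient.
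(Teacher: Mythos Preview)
Your proof is correct and shares with the paper the key opening observation that conjugation by $a$ permutes the generators $S^{a^k}$, so that $a$ normalizes $H$. From that point on, however, the two arguments diverge. The paper lets $\langle a\rangle$ act by conjugation on $\Syl_p(H)$, uses the congruence $|\Syl_p(H)|\equiv 1\pmod p$ to locate a Sylow subgroup $T$ of $H$ with $T^a=T$, and then concludes $a\in T\subseteq H$ because $T$ is already Sylow in $G$ and $\langle T,a\rangle$ is a $p$-group. Your route instead passes to the overgroup $K=H\langle a\rangle$ and the quotient $K/H$: since $S\leq H$ the image of the Sylow subgroup $S$ in $K/H$ is trivial, while $K/H$ is a $p$-group, forcing $K=H$. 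Your argument is a bit slicker in that it replaces the fixed-point count by a single appeal to the standard fact that Sylow subgroups surject onto Sylow subgroups of quotients; the paper's version, on the other hand, is entirely self-contained and actually exhibits a Sylow subgroup of $G$ containing $a$ inside $H$, which is marginally more informative.
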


\begin{proof}
Let $H = \<S^{a^k} \mid k \in \Z\>$. Since $S \subseteq H \subseteq
G$ and $S$ is a Sylow $p$-subgroup of $G$, $S$ is also a Sylow
$p$-subgroup of $H$. Then $\Syl_p(H)$ is a subset of $\Syl_p(G)$.

Since conjugation by $a$ maps $S^{a^k}$ into $S^{a^{k+1}}$, $a$
normalizes $H$. Then the group $\<a\>$, whose order is a power of
$p$, acts on $H$ by conjugation. In particular, it acts on
$\Syl_p(H)$ by conjugation. Any orbit of this action has size
divisible by $p$ unless it consists of a single element, and since
$|\Syl_p(H)|$ is congruent to 1 modulo $p$, the action then has a
fixed point. That is, there is a group $T \in \Syl_p(H)$ such that
$T^a = T$. This implies that $\<T, a\>$ is a $p$-subgroup of $G$
containing $T$. Because $T$ is a Sylow $p$-subgroup of $G$, we must
then have $T = \<T, a\>$, and then $a \in T \subseteq H$.
\end{proof}

\begin{lem}\label{konj-prod}
Let $a$ and $b$ be elements of the group $G$, and let $n \in \N$.
Then $\prod_{i=0}^{n-1} a^{b^{-i}} = (ab)^n\cdot b^{-n}$.
\end{lem}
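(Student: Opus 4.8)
The plan is to prove the identity by induction on $n$, exploiting the telescoping structure that the right-hand side suggests. For the base case $n = 1$, the left-hand side is $a^{b^0} = a$ and the right-hand side is $(ab)b^{-1} = a$, so the two agree. (If one prefers to start at $n = 0$, the left-hand side is the empty product, equal to the identity, while the right-hand side is $(ab)^0 b^0 = 1$.)

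For the inductive step, I would assume $\prod_{i=0}^{n-1} a^{b^{-i}} = (ab)^n b^{-n}$ and compute the product up to $n$ by appending the single new factor $a^{b^{-n}}$ on the right, in accordance with the stated convention that $\prod_{i=a}^b f(i)$ multiplies the factors in order of increasing index. Writing $a^{b^{-n}} = b^n a b^{-n}$, the key manipulation is
\[
\prod_{i=0}^{n} a^{b^{-i}} = (ab)^n b^{-n} \cdot b^n a b^{-n} = (ab)^n a b^{-n},
\]
where the powers $b^{-n}$ and $b^n$ cancel in the middle. It then remains to observe that $(ab)^n a b^{-n} = (ab)^{n+1} b^{-(n+1)}$, which follows by inserting $b^{-1}b = 1$ to rewrite $(ab)^n a = (ab)^n (ab) b^{-1} = (ab)^{n+1} b^{-1}$, and this is exactly the target identity with $n$ replaced by $n+1$.

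This argument involves no genuine obstacle; the only points requiring care are the direction of the product — so that the new factor is attached on the correct side and the interior powers of $b$ cancel cleanly — and the reading $a^{b^{-i}} = b^i a b^{-i}$ of the conjugation notation. Once these conventions are fixed, the interior powers of $b$ telescope at each step and the induction closes immediately.
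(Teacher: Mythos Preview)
Your proof is correct and is essentially identical to the paper's own argument: both proceed by induction on $n$ with base case $n=1$, append the factor $a^{b^{-n}}$ on the right, cancel $b^{-n}\cdot b^n$, and then insert $b\cdot b^{-1}$ to absorb the trailing $a$ into $(ab)^{n+1}$. The paper's proof is just a slightly terser rendering of exactly the steps you describe.
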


\begin{proof}
By induction. The statement is clear for $n = 1$, and for higher
$n$, we have
\begin{align*}
\prod_{i=0}^n a^{b^{-i}} & = \left(\prod_{i=0}^{n-1} a^{b^{-i}}\right) \cdot a^{b^{-n}}
= (ab)^n \cdot b^{-n} \cdot a^{b^{-n}} = (ab)^n \cdot a \cdot b^{-n} \\
& = (ab)^n \cdot ab \cdot b^{-(n+1)} = (ab)^{n+1} \cdot b^{-(n+1)}
\end{align*}
\end{proof}

\begin{lem}\label{cent-orb}
Let $G$ be a finite group that acts on a set $X$, let $H$ be a
subgroup of $G$, and let $O_1, \ldots, O_k$ be the orbits of the
action of $H$ on $X$. Then the action of $C_G(H)$ on $X$ induces an
action on the set $\{O_1, \ldots, O_k\}$ by $g(O_i) = \{ g(x) \mid x
\in O_i\}$.
\end{lem}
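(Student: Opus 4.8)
The plan is to verify that the prescribed assignment $O_i \mapsto g(O_i)$ is a well-defined group action, the crux being that the image of an $H$-orbit under an element of $C_G(H)$ is again an $H$-orbit. First I would recall that each orbit $O_i$ has the form $H \cdot x = \{h(x) \mid h \in H\}$ for some representative $x \in O_i$. Given $g \in C_G(H)$, the defining property $gh = hg$ for all $h \in H$ yields $g(h(x)) = (gh)(x) = (hg)(x) = h(g(x))$ for every $h \in H$. Consequently $g(O_i) = \{g(h(x)) \mid h \in H\} = \{h(g(x)) \mid h \in H\} = H \cdot g(x)$, so $g(O_i)$ is precisely the $H$-orbit of the point $g(x)$ and hence equals one of $O_1, \ldots, O_k$. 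This shows that $g$ sends the set of orbits into itself, so $g(O_i)$ is a meaningful element of $\{O_1, \ldots, O_k\}$.

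Having established well-definedness, I would confirm the two axioms of a group action on $\{O_1, \ldots, O_k\}$. For the identity $e \in C_G(H)$ we have $e(O_i) = \{e(x) \mid x \in O_i\} = O_i$. For compatibility, given $g_1, g_2 \in C_G(H)$, the associativity of the action on $X$ gives $(g_1 g_2)(x) = g_1(g_2(x))$ for all $x \in X$, whence $(g_1 g_2)(O_i) = \{(g_1 g_2)(x) \mid x \in O_i\} = \{g_1(g_2(x)) \mid x \in O_i\} = g_1(g_2(O_i))$. Thus the assignment is genuinely an action of $C_G(H)$ on the orbit set.

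The only point of substance is the well-definedness step, and the sole obstacle there is ensuring that $g$ commutes with all of $H$; this is exactly the hypothesis $g \in C_G(H)$, without which the image $g(O_i)$ would in general fail to be $H$-stable and so would not be one of the $O_j$. Everything else is a formal verification of the action axioms, so I expect the argument to be short.
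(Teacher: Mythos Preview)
Your proof is correct and follows the same approach as the paper: the key step is the computation $g(h(x)) = h(g(x))$ for $g \in C_G(H)$ and $h \in H$, showing that $g(O_i)$ is the $H$-orbit of $g(x)$. The paper's proof stops there, whereas you also spell out the verification of the action axioms, but this is a routine addition rather than a different argument.
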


\begin{proof}
Let $O$ be an orbit of $X$ under $H$, let $x \in O$, and let $g \in
C_G(H)$. Then for any $h \in H$, we have $h(g(x)) = g(h(x))$, so the
orbit of $X$ containing $g(x)$ is precisely $g(O)$.
\end{proof}

\section{Central idempotents of the group ring}\label{sec-idem}

In this section, we analyze the central idempotents of $RG$, where
$R$ is a ring of characteristic $p$. Note that we do not assume that
$R$ is a field, or even a commutative ring. It is easily seen that
$Z(RG)$ consists of all elements of the form $\sum_{i=1}^k r_i \cdot
\Sigma\C_i$ where $r_i \in Z(R)$ and each $\C_i$ is a conjugacy
class of $G$.

\begin{lem}\label{exp-block}
Let $a \in G$, $n \in \N$, and suppose that for every conjugacy
class $\C$ in $G$, the coefficient of $a$ in $(\Sigma\C)^{p^n}$ is
zero. Then $a$ has coefficient zero in all central idempotents of
$RG$.
\end{lem}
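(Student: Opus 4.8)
The plan is to exploit the fact that any central idempotent $e$ satisfies $e^{p^n} = e$ (since $e^2 = e$ implies $e^m = e$ for all $m \geq 1$), combined with a Frobenius-type expansion of the $p^n$-th power in characteristic $p$. First I would write an arbitrary central idempotent as $e = \sum_i r_i \cdot \Sigma\C_i$ with $r_i \in Z(R)$ and the $\C_i$ conjugacy classes, as described just before the lemma. The goal is to show that the coefficient of $a$ in $e$ is zero; the bridge is the identity $e = e^{p^n}$, so it suffices to show that $a$ has coefficient zero in $e^{p^n}$.

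The key step is expanding $e^{p^n} = \left(\sum_i r_i \Sigma\C_i\right)^{p^n}$. In a ring of characteristic $p$, the multinomial expansion of a $p^n$-th power collapses dramatically: writing $N = p^n$, the general term in $\left(\sum_i x_i\right)^{N}$ is a sum of words, and the coefficient of any word involving more than one distinct letter is a multinomial coefficient divisible by $p$, hence zero, \emph{provided} the letters commute. Since the $r_i$ lie in $Z(R)$ and the class sums $\Sigma\C_i$ lie in $Z(RG)$, all the summands $r_i\Sigma\C_i$ commute with one another, so the expansion reduces to $\sum_i (r_i \Sigma\C_i)^{p^n} = \sum_i r_i^{p^n} (\Sigma\C_i)^{p^n}$, again using commutativity of $r_i$ with $\Sigma\C_i$ to separate the scalar. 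By hypothesis, $a$ has coefficient zero in each $(\Sigma\C_i)^{p^n}$; multiplying by the central scalar $r_i^{p^n}$ does not introduce a nonzero coefficient at $a$ (scalar multiplication acts coefficientwise), so $a$ has coefficient zero in the whole sum, and therefore in $e$.

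The main obstacle I expect is the Frobenius/multinomial vanishing step, and in particular making it rigorous \emph{without} assuming $R$ is commutative. The familiar freshman's-dream identity $(x+y)^p = x^p + y^p$ requires $x$ and $y$ to commute, and here we have potentially many noncommuting scalars in $R$; the saving grace is that each coefficient $r_i$ is pulled from $Z(R)$ and each $\Sigma\C_i$ from $Z(RG)$, so every factor $r_i\Sigma\C_i$ is genuinely central in $RG$ and the elements being added all commute pairwise. I would therefore prove a clean auxiliary statement: if $z_1, \ldots, z_k$ are pairwise commuting elements of any ring of characteristic $p$, then $\left(\sum_j z_j\right)^{p^n} = \sum_j z_j^{p^n}$. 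The cleanest route is induction on $n$, reducing to the case $p^1$ where one uses that $\binom{p}{j} \equiv 0 \pmod p$ for $0 < j < p$; the commutativity hypothesis is exactly what licenses the binomial theorem here, and iterating gives the $p^n$ statement. Once this lemma is in hand, the proof of Lemma~\ref{exp-block} is the short three-line argument sketched above.
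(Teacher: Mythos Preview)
Your proposal is correct and follows essentially the same approach as the paper's proof: write $e=\sum_i r_i\,\Sigma\C_i$, use idempotence to get $e=e^{p^n}$, apply Freshman's Dream (valid because the summands lie in the commutative ring $Z(RG)$), and conclude that the coefficient of $a$ vanishes. The paper invokes Freshman's Dream with one sentence, whereas you spell out the commutativity justification and the auxiliary induction more explicitly, but the argument is the same.
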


\begin{proof}
Let $e$ be a central idempotent of $RG$, and write $e = \sum_{i=1}^k
r_i \cdot \Sigma\C_i$ for some elements $r_i \in Z(R)$ and conjugacy
classes $\C_i$ of $G$. Since $e$ is idempotent, we have $e^{p^n} =
e$. As $Z(RG)$ is commutative, we may apply Freshman's Dream, and we
find
\[
e = e^{p^n} = \left(\sum_{i=1}^k r_i \cdot \Sigma\C_i\right)^{p^n}
= \sum_{i=1}^k r_i^{p^n} \cdot (\Sigma\C_i)^{p^n}
\]
By assumption, $a$ has coefficient zero on the right hand side, so
it also has coefficient zero in $e$.
\end{proof}

When $q$ is a $p$-regular exponent, it turns out to be fairly simple
to describe $(\Sigma\C)^q$:

\begin{thm}\label{exp-coef}
Let $G$ be a finite group, $\C$ a conjugacy class of $G$, $a$ an
element of $G$, and $q$ a $p$-regular exponent of $G$. If $a$ is not
$p$-regular, then the coefficient of $a$ in $(\Sigma\C)^q$ is zero.
If $a$ is $p$-regular, let $S$ be a Sylow $p$-subgroup of $C_G(a)$,
and let $H$ be the set of all $p$-elements of $C_G(S)$. Then the
coefficient of $a$ in $(\Sigma\C)^q$ is equal to the number of
elements $h$ of $H$ such that $ah \in \C$.
\end{thm}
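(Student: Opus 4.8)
The plan is to compute the coefficient of $a$ in $(\Sigma\C)^q$ directly by expanding the product. We have
\[
(\Sigma\C)^q = \left(\sum_{c \in \C} c\right)^q = \sum_{(c_1,\ldots,c_q) \in \C^q} c_1 c_2 \cdots c_q,
\]
so the coefficient of $a$ is the number of $q$-tuples $(c_1,\ldots,c_q)$ of elements of $\C$ whose product equals $a$. I would first dispose of the non-$p$-regular case. Since $q$ is a $p$-regular exponent, for any element $b \in G$ we have $b^q = b_{p'}$, which is $p$-regular. The idea is to set up a free action of the cyclic group $C_q = \langle \sigma \rangle$ on the set of product-tuples equal to $a$, where $\sigma$ cyclically shifts a tuple: $\sigma(c_1, c_2, \ldots, c_q) = (c_2, c_3, \ldots, c_q, c_1)$. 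A cyclic shift sends a tuple with product $a = c_1 \cdots c_q$ to one with product $c_2 \cdots c_q c_1 = c_1^{-1} a c_1$, which is conjugate to $a$ but not equal unless $c_1$ centralizes $a$. So the naive shift does not fix the product; I would instead conjugate back, defining the action on tuples-with-product-$a$ by $\sigma(c_1,\ldots,c_q) = (c_2^{c_1}, c_3^{c_1}, \ldots, c_q^{c_1}, c_1)$ — i.e.\ shift and conjugate everything by $c_1$ — so that the new product is again $a$ and each entry stays in $\C$. A fixed point of this $C_q$-action forces all $c_i$ to be related in a way that makes $a$ equal to a $q$-th-power-type expression; using $q$ being a power of $p$ and Freshman's Dream/the $p$-regular exponent property, one shows a fixed tuple yields $a$ a $p$-regular element. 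Since $q$ is a power of $p$, every orbit has size a power of $p$, hence divisible by $p$ unless it is a fixed point; so modulo $p$ (and hence in $R$, which has characteristic $p$) the coefficient equals the number of fixed tuples. When $a$ is not $p$-regular, there are no fixed tuples, giving coefficient zero.

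For the $p$-regular case, I would analyze the fixed tuples of this $C_q$-action precisely. A tuple $(c_1, \ldots, c_q)$ is fixed exactly when $c_{i+1}^{c_1} = c_i$ for the shifted indices, which unwinds to the condition that all the $c_i$ are determined by $c_1$ and a single conjugating element, via a relation of the form treated in Lemma~\ref{konj-prod}. Concretely, writing $b = c_1$ and using $\prod_{i=0}^{q-1} a^{b^{-i}} = (ab)^q b^{-q}$ from Lemma~\ref{konj-prod}, I would match a fixed tuple to the data of an element $b \in \C$ together with the constraint that the product telescopes to $a$. The relation $(ab)^q = $ (product) $ \cdot b^q$ combined with $b^q = b_{p'}$ lets me translate the fixed-point condition into: $b$ generates (together with its iterated conjugates) a configuration in which a $p$-element $h$ appears, with $ah \in \C$. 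This is where the group $H$ of $p$-elements of $C_G(S)$ enters.

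The main obstacle, and the heart of the argument, is identifying the fixed tuples bijectively with the set $\{h \in H \mid ah \in \C\}$. I would proceed as follows. Given a fixed tuple, its associated conjugating element $b$ satisfies $b^q = b_{p'}$ and the telescoping identity forces $b_{p'}$ to behave like $a$ up to the insertion of a $p$-element; writing $h = b_p$ (or an appropriate $p$-part extracted via Lemma~\ref{grp-elem-part}), one shows $ah \in \C$ and that $h$ centralizes $a$, in fact that $h$ lies in a Sylow $p$-subgroup $S$ of $C_G(a)$ — here Lemma~\ref{syl-konj} is used to locate $h$ inside the group generated by conjugates of $S$, and Lemma~\ref{cent-orb} controls the commuting structure so that $h \in C_G(S)$, i.e.\ $h \in H$. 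Conversely, given $h \in H$ with $ah \in \C$, I would reconstruct the unique fixed tuple with $c_1 = ah$ (or the appropriate normalization) by setting $c_i = (ah)^{a^{-(i-1)}}$ and verifying via Lemma~\ref{konj-prod} that the product is $a$ and that the tuple is $C_q$-fixed. The delicate points are checking that the extracted $h$ is genuinely a $p$-element of $C_G(S)$ rather than merely a $p$-element centralizing $a$, and that the correspondence $h \leftrightarrow$ (fixed tuple) is a genuine bijection with no overcounting — this requires the uniqueness of $p$-part/$p'$-part decomposition (Lemma~\ref{grp-elem-part}) and careful bookkeeping with the conjugation action. Once the bijection is established, the coefficient of $a$, being the number of fixed tuples modulo $p$, equals $|\{h \in H \mid ah \in \C\}|$ as an element of $R$, which is the claimed formula.
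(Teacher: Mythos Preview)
Your approach has a genuine gap: the ``shift and conjugate'' map $\sigma$ does not give an action of $C_q$ on the set $T_a$ of tuples with product $a$. First, to keep the product equal to $a$ you must conjugate entrywise by $c_1^{-1}$, not by $c_1$ (with your sign the new product is $a^{c_1^2}$). But even with the correct sign, an easy induction (using that entrywise conjugation commutes with the plain shift $\tau$) gives $\sigma^k(c_1,\ldots,c_q)=\tau^k(c_1,\ldots,c_q)^{(c_1\cdots c_k)^{-1}}$, hence $\sigma^q$ is entrywise conjugation by $a^{-1}$, not the identity. So $\langle\sigma\rangle$ is not a $p$-group in general, and your orbit-size argument collapses. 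Moreover, even granting the action, its fixed tuples are exactly the constant tuples $(c,\ldots,c)$ with $c^q=a$, which correspond to $p$-elements $h=c_p$ of $C_G(a)$, not of $C_G(S)$. Your invocation of Lemma~\ref{syl-konj} does not bridge this: that lemma only places a given $p$-element inside $\langle S^{b^k}\mid k\in\Z\rangle$; it never forces anything to centralise $S$.

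The paper sidesteps both problems with a two-stage reduction. First it lets $G\times C_q$ act on all of $\C^q$ by conjugation and plain shift---a genuine action---and shows that only orbits whose stabiliser surjects onto $C_q$ contribute modulo $p$; tuples $\alpha$ in such orbits admit a ``defining pair'' $(x,h)$ with $c_i=x^{h^{1-i}}$ and $h$ a $p$-element, and Lemma~\ref{konj-prod} gives $\pi(\alpha)=(xh)_{p'}$, settling the non-$p$-regular case. Second, on the set $X_a$ of such tuples with product $a$ it lets the Sylow subgroup $S\subseteq C_G(a)$ act by conjugation---a genuine $p$-group action---so the count reduces modulo $p$ to the $S$-fixed set $X_a'$. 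The real work, and the actual use of Lemma~\ref{syl-konj}, happens here: for $\alpha\in X_a'$ with defining pair $(x,h)$ and $b=(xh)_p$, one first proves $s^{h^k}=s^{b^k}$ for all $s\in S$, then Lemma~\ref{syl-konj} (applied inside $C_G(a)$) forces $b\in C_G(x)$, which lets one replace $h$ by the $p$-element $hb^{-1}\in C_G(S)$ satisfying $x\cdot hb^{-1}=a$. That second $S$-reduction is precisely the missing idea in your single twisted-shift approach.
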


\begin{proof}
Write $\C^q$ for the set of $q$-tuples of elements of $\C$, and
define a map $\pi: \C^q \to G$ by $\pi(x_1, \ldots, x_q) =
\prod_{i=1}^q x_i$. Then clearly $(\Sigma\C)^q = \sum_{\alpha \in
\C^q} \pi(\alpha)$.

$G$ acts on $\C^q$ by $(x_1, \ldots, x_q)^g = (x_1^g, \ldots,
x_q^g)$, and this action clearly satisfies $\pi(\alpha^g) =
\pi(\alpha)^g$. We also define an action of $C_q$ on $\C^q$ as
follows. Fix a generator $\sigma$ of $C_q$, and define $(x_1, x_2,
\ldots, x_q)^\sigma = (x_2, \ldots, x_q, x_1)$. Then for $\alpha =
(x_1, x_2, \ldots, x_q)$, we find $\pi(\alpha^\sigma) =
\prod_{i=2}^q x_i \cdot x_1 = x_1^{-1} \cdot \prod_{i=1}^q x_i \cdot
x_1 = \pi(\alpha)^{x_1}$, so $\pi(\alpha^\sigma)$ and $\pi(\alpha)$
are conjugate in $G$. Additionally, this action commutes with the
action of $G$, so we have an action of $G \times C_q$ on $\C^q$.

Consider for a fixed $\alpha \in \C^q$ the set $M = \{\pi(\alpha^g)
\mid g \in G \times C_q\}$ where we count elements with
multiplicity. We partition $M$ into $q$ sets $M_0, M_1, \ldots,
M_{q-1}$ by defining $M_i = \{\pi(\alpha^{(g,\sigma^i)}) \mid g \in
G\}$. Since $\pi(\alpha^{(g,\sigma^i)}) = \pi(\alpha^{\sigma^i})^g$,
$M_i$ consists of $|C_G(\pi(\alpha^{\sigma^i}))|$ copies of
$[\pi(\alpha^{\sigma^i})]$. As $\pi(\alpha)$ and
$\pi(\alpha^{\sigma^i})$ are conjugate in $G$, we have
$|C_G(\pi(\alpha^{\sigma^i}))| = |C_G(\pi(\alpha))|$ and
$[\pi(\alpha^{\sigma^i})] = [\pi(\alpha)]$, so $M$ consists of $q
\cdot |C_G(\pi(\alpha))|$ copies of $[\pi(\alpha)]$. Now when $g$
runs over $G\times C_q$, $\alpha^g$ runs over $\alpha^{G\times C_q}$
exactly $|C_{G\times C_q}(\alpha)|$ times, so we find
\[
\sum_{\beta \in \alpha^{G\times C_q}} \pi(\beta)
= \frac{q\cdot |C_G(\pi(\alpha))|}{|C_{G\times C_q}(\alpha)|}
\cdot \Sigma[\pi(\alpha)]
\]

Now consider the map $\varphi: C_{G\times C_q}(\alpha) \to C_q$
defined as the restriction of the projection $G \times C_q \to C_q$.
The kernel of $\varphi$ is clearly $C_G(\alpha)$, and the image is a
subgroup of $C_q$ of order $p^k$, say. Then $|C_{G\times
C_q}(\alpha)| = p^k \cdot |C_G(\alpha)|$. Since $\alpha^g = \alpha$
implies $\pi(\alpha)^g = \pi(\alpha^g) = \pi(\alpha)$, we also have
$C_G(\alpha) \subseteq C_G(\pi(\alpha))$. Inserting this in the
above equation, we get
\[
\sum_{\beta \in \alpha^{G\times C_q}} \pi(\beta)
= \frac{q}{p^k} \cdot |C_G(\pi(\alpha)) : C_G(\alpha)|
\cdot \Sigma[\pi(\alpha)]
\]

If $\varphi$ is not surjective, $p^k$ is a smaller power of $p$ than
$q$, and then $\frac{q}{p^k}$ is divisible by $p$. Since we are
working in characteristic $p$, this immediately implies $\sum_{\beta
\in \alpha^{G\times C_q}} \pi(\beta) = 0$. Furthermore, the question
of whether $\varphi$ is surjective depends only on $\alpha^{G\times
C_q}$ since if $\alpha$ and $\alpha'$ lie in the same orbit, then
$C_{G\times C_q}(\alpha)$ and $C_{G\times C_q}(\alpha')$ are
conjugate in $G\times C_q$. We define $X \subseteq \C^q$ to consist
of those $\alpha \in \C^q$ for which $\varphi$ is surjective; this
is then a union of $G \times C_q$-orbits, and we have
\[
(\Sigma\C)^q = \sum_{\alpha\in\C^q} \pi(\alpha) = \sum_{\alpha \in
X} \pi(\alpha)
\]

Now let $\alpha \in X$ and write $\alpha = (x_1, x_2, \ldots, x_q)$.
Since the projection of $C_{G\times C_q}(\alpha)$ onto $C_q$ is
surjective, there exists an $h \in G$ such that $(h,\sigma) \in
C_{G\times C_q}(\alpha)$. We then have $(x_1, x_2, \ldots, x_q) =
(x_1, x_2, \ldots, x_q)^{(h,\sigma)} = (x_2^h, \ldots, x_q^h,
x_1^h)$. This shows that $x_{i+1}^h = x_i$ for $1 \leq i \leq q-1$;
by induction, we then obtain $x_i = x_1^{h^{1-i}}$ for $1 \leq i
\leq q$. This shows that $\alpha$ is determined by $x_1$ and $h$
alone. Further, the equation $x_1^h = x_q$ implies $x_1^{h^{-q}} =
x_1$, so that $x_1$ and $h^q$ commute.

We say that $(x,h)$ is a \emph{defining pair} for $\alpha$ if $x$
and $h^q$ commute and $\alpha = (x, x^{h^{-1}}, x^{h^{-2}}, \ldots,
x^{h^{1-q}})$. The above then shows that every $\alpha \in X$ has a
defining pair.

Let $\alpha \in X$ and let $(x,h)$ be a defining pair for $\alpha$.
Since $q$ is a $p$-regular exponent of $G$, we have $h^q = h_{p'}$.
Then $h_{p'}$ commutes with $x$, and for any $k \in \Z$, we have
$x^{h^{-k}} = x^{h_{p'}^{-k} \cdot h_p^{-k}} = x^{h_p^{-k}}$.
Additionally, $h_p^q$ commutes with $x$ since it is the identity
element, so $(x,h_p)$ is a defining pair for $\alpha$. Then every
$\alpha \in X$ has a defining pair $(x,h)$ where $h$ is a
$p$-element; for these pairs, the condition that $x$ and $h^q$
commute is redundant since $h^q$ is the identity. From now on, it
will be assumed that if $(x,h)$ is a defining pair, then $h$ is a
$p$-element.

Let again $\alpha \in X$ and let $(x,h)$ be a defining pair for
$\alpha$. Using Lemma \ref{konj-prod}, we find
\[
\pi(\alpha) = \prod_{i=0}^{q-1} x^{h^{-i}} = (xh)^q \cdot h^{-q}
= (xh)^q = (xh)_{p'}
\]
In particular, $\pi(\alpha)$ is a $p$-regular element. This proves
the first part of the theorem.

For the second part, let $a$ be a $p$-regular element, and let $X_a$
be the set of $\alpha \in X$ satisfying $\pi(\alpha) = a$. Then the
coefficient of $a$ in $(\Sigma\C)^q$ is equal to $|X_a|$. We have
already seen that $G$ acts on $X$ by conjugation; since
$\pi(\alpha^g) = \pi(\alpha)^g$, $C_G(a)$ maps $X_a$ to itself under
this action. Then $C_G(a)$ acts on $X_a$ by conjugation. Let $S$ be
a Sylow $p$-subgroup of $C_G(a)$, and let $X_a'$ be the set of fixed
points of $S$ in $X_a$. Since $S$ is a $p$-group, any orbit of $S$
in $X_a$ has size divisible by $p$ unless it consists of a single
element. Then $|X_a|$ and $|X_a'|$ are congruent modulo $p$, so
$|X_a'|$ is equal to the coefficient of $a$ in $(\Sigma\C)^q$.

Now let $\alpha \in X_a'$, and let $(x,h)$ be a defining pair for
$\alpha$; by the above, we have $(xh)_{p'} = a$. Let $s \in S$;
since $\alpha^s = \alpha$, $s$ commutes with $x^{h^{-k}}$ for every
$k \in \Z$. Conjugating by $h^k$, we get that $s^{h^k}$ commutes
with $x$ for every $k \in \Z$.

Define $b = (xh)_p$; then $a$ and $b$ commute and we have $xh =
(xh)_{p'}\cdot (xh)_p = ab$. Let $s \in S$; we prove by induction
that $s^{h^k} = s^{b^k}$ for every $k \in \N$ (and hence for every
$k \in \Z$). This is clear for $k = 0$, and for higher $k$, we get
\[
s^{h^{k+1}} = \left(s^{h^k}\right)^h = \left(s^{h^k}\right)^{xh}
= \left(s^{b^k}\right)^{ab} = \left(s^{b^k}\right)^b = s^{b^{k+1}}
\]
where we have used the fact that $x$ commutes with $s^{h^k}$, the
induction hypothesis, the equation $xh = ab$, and the fact that $a$
commutes with both $s$ and $b$.

We now see that $s^{b^k}$ commutes with $x$ for every $k \in \Z$.
Then $\<S^{b^k} \mid k \in \Z\> \subseteq C_G(x)$. Since $S$ is a
Sylow $p$-subgroup of $C_G(a)$ and $b \in C_G(a)$, Lemma
\ref{syl-konj} provides that $b \in \<S^{b^k} \mid k \in \Z\>$. Then
$b$ commutes with $x$; since it also commutes with $a$ and $b$, the
equation $xh = ab$ implies that $b$ commutes with $h$. Since both
$b$ and $h$ are $p$-elements, $hb^{-1}$ is a $p$-element, and for
any $k \in \Z$, we have
\[
x^{(hb^{-1})^{-k}} = x^{h^{-k}b^k} = x^{b^kh^{-k}} = x^{h^{-k}}
\]
Then $(x,hb^{-1})$ is a defining pair for $\alpha$, and we have
$xhb^{-1} = abb^{-1} = a$. Additionally, $hb^{-1}$ centralizes $S$,
since $s^h = s^b$ for every $s \in S$.

We conclude that any $\alpha \in X_a'$ has a defining pair $(x,h)$
such that $xh = a$ and $h$ centralizes $S$. But $x$ is determined by
$\alpha$, since it is the first element of the tuple, and $h$ is
then determined by the equation $xh = a$. Thus a defining pair of
this form is unique. Conversely, it is easily seen that any pair
$(x,h)$ such that $x \in \C$, $h$ is a $p$-element that centralizes
$S$, and $xh = a$, is a defining pair of an element of $X_a'$. Then
these pairs are in one-to-one correspondence with the elements of
$X_a'$, so in order to determine $|X_a'|$, we may simply count the
pairs instead. Since $xh = a$, these pairs are determined by $h$
alone, and an element $h \in G$ defines a valid pair if $h$ is a
$p$-element, $h$ centralizes $S$, and $x = ah^{-1} \in \C$. Since
the inverse of a $p$-element is a $p$-element, we may replace $h$ by
$h^{-1}$. This is exactly the second part of the theorem.
\end{proof}

As a corollary to the first part of this theorem, we get the
following well-known result:

\begin{cor}
Let $a \in G$ be an element that is not $p$-regular. Then $a$ has
coefficient zero in all block idempotents of $G$.
\end{cor}

\begin{thm}\label{exp-coef-zero}
Let $a$ be a $p$-regular element of $G$, let $S$ be a Sylow
$p$-subgroup of $C_G(a)$, and suppose that $C_G(S)$ contains a
normal abelian $p$-subgroup $P$ such that $a$ does not centralize
$P$. Then for any conjugacy class $\C$ of $G$ and any $p$-regular
exponent $q$ of $G$, the coefficient of $a$ in $(\Sigma\C)^q$ is
zero. In particular, $a$ has coefficient zero in all central
idempotents of $RG$.
\end{thm}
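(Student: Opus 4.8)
The plan is to invoke Theorem \ref{exp-coef} as the main engine. Since $a$ is $p$-regular, that theorem tells us the coefficient of $a$ in $(\Sigma\C)^q$ equals the number of $p$-elements $h \in C_G(S)$ with $ah \in \C$ (where $H$ is the set of $p$-elements of $C_G(S)$). So it suffices to show that this count is divisible by $p$, since we are working in characteristic $p$ and a multiple of $p$ vanishes in $R$. The natural way to force divisibility by $p$ is to find a nontrivial $p$-group acting freely, or at least with no fixed points, on the set $H_a := \{h \in H \mid ah \in \C\}$, so that all orbits have size divisible by $p$.

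The candidate for this action is conjugation by the normal abelian $p$-subgroup $P \trianglelefteq C_G(S)$. First I would check that $P$ acts on $H_a$: for $h \in H$ and $u \in P$, the conjugate $h^u$ is again a $p$-element of $C_G(S)$ (since $P$ normalizes $C_G(S)$ — indeed $P \le C_G(S)$ — and conjugation preserves being a $p$-element), and crucially, because $a$ centralizes $P$ (wait, it does not — $a$ does \emph{not} centralize $P$, which is exactly the hypothesis I want to exploit), I must be careful about how $a$ interacts with the conjugation. The key observation is that $u \in P \le C_G(S)$ and $a$ need not commute; but $ah$ and its $P$-conjugate $(ah)^u = a^u h^u$ must still be tracked. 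The cleaner approach: let $P$ act on $H_a$ by $h \mapsto h^u$ and verify $ah^u \in \C$. Since $\C$ is a conjugacy class, $(ah)^u = a^u h^u \in \C$, but I want $a h^u \in \C$, not $a^u h^u$. So the raw conjugation action does not obviously preserve $H_a$, and this is the main obstacle.

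The hard part will be setting up the correct $P$-action. I expect the resolution is to use that $a$ normalizes $S$ (as $S \in \Syl_p(C_G(a))$, so $a \in C_G(S)$'s... actually $a$ centralizes $S$), hence $a$ normalizes $C_G(S)$ and therefore acts on the normal subgroup $P$; the hypothesis that $a$ does \emph{not} centralize $P$ means this action of $a$ on $P$ (by conjugation) is nontrivial. I would then consider the action on $H_a$ given by $h \mapsto a^{-1} h^u a \cdot (\text{correction})$, or more likely define the action so that the element $ah$ gets conjugated within $\C$ while the $p$-regular part stays $a$. Concretely, decompose: if $ah \in \C$, conjugating $ah$ by $u \in P$ gives $(ah)^u \in \C$ with $p$-part $h^u$ and $p'$-part $a^u$; to return to an element with $p'$-part exactly $a$, I conjugate back by the element of $C_G(a^u, a)$ — but $a^u$ and $a$ are conjugate $p$-regular elements, so I can fix a coset and build a genuine $P$-action (or a $P/\mathrm{Stab}$ action) on $H_a$. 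The fixed points of this action correspond to $h$ with $h^u = h$ and $a^u = a$ for all $u$, i.e.\ to $h \in C_G(P)$ and the requirement $a \in C_G(P)$; since $a \notin C_G(P)$ by hypothesis, there can be \emph{no} fixed points, forcing every orbit to have size divisible by $p$ and hence $|H_a| \equiv 0 \pmod p$. Once $|H_a|$ is a multiple of $p$, its image in $R$ is zero, giving coefficient zero in $(\Sigma\C)^q$; the final sentence then follows immediately from Lemma \ref{exp-block}.
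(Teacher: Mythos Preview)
Your plan --- reduce via Theorem \ref{exp-coef} to showing $|H_a| \equiv 0 \pmod p$, exhibit a $P$-action on $H_a$ with no fixed points, then invoke Lemma \ref{exp-block} --- is exactly right, but there is a genuine gap in the construction of the action. You assert that $(ah)^u$ has $p$-part $h^u$ and $p'$-part $a^u$; this would require $a$ and $h$ to commute, which is nowhere given (both lie in $C_G(S)$, but that group is not abelian in general). Consequently the ``conjugate back to restore the $p'$-part to $a$'' step is not well-founded, and the coset choices you allude to do not visibly assemble into a group action.

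The fix is to bypass the $p$-part/$p'$-part decomposition altogether: set $h \cdot u := a^{-1}(ah)^u$ for $u \in P$. Then $a(h\cdot u) = (ah)^u \in \C$, and since $a,h \in C_G(S)$ and $P \unlhd C_G(S)$ one has $h \cdot u = (u^{-1})^a\, h\, u \in P\langle h\rangle$, which is a $p$-group; hence $h \cdot u \in H_a$. This is a genuine right $P$-action, and a fixed point would force $ah \in C_G(P)$, so that conjugation by $a$ and by $h^{-1}$ agree on $P$; as the former automorphism has order prime to $p$ and the latter $p$-power order, both must be trivial, contradicting $a \notin C_G(P)$. The paper's argument is this same action in disguise: it rewrites $(ah)^u = ah\cdot\varphi(u)$ with $\varphi(u)=(u^{-1})^{ah}u \in P$, partitions $H_a$ into pieces on which the homomorphism $\varphi$ is constant, and then lets the nontrivial $p$-group $\varphi(P)$ act freely on each piece by right translation.
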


\begin{proof}
Fix a $p$-regular exponent $q$ and a conjugacy class $\C$, and let
$M$ be the set of $p$-elements $h$ of $C_G(S)$ such that $ah \in
\C$. By Theorem \ref{exp-coef}, the coefficient of $a$ in
$(\Sigma\C)^q$ is equal to $|M|$, so we wish to prove that $|M|$ is
divisible by $p$.

For each $h \in M$, we let $M_h$ be the set of elements $h'$ of $M$
such that $g^{h'} = g^h$ for all $g \in P$. The sets $M_h$ then form
a partition of $M$, so it is sufficient to show that each $M_h$ has
size divisible by $p$. Fix an $M_h$, and define a map $\varphi: P
\to P$ by $\varphi(g) = (g^{-1})^{ah}\cdot g$. Since both $a$ and
$h$ lie in $C_G(S)$ and $P$ is normal in $C_G(S)$, $\varphi(g)$ is
in fact an element of $P$, and because $P$ is abelian, $\varphi$ is
a homomorphism. Furthermore, $\varphi$ only depends on the set $M_h$
and not on the element $h$; if we choose another $h' \in M_h$, we
have $(g^{-1})^{ah'}\cdot g = (g^{-1})^{ah}\cdot g$ since
$(g^{-1})^a \in P$.

If $\varphi(P)$ is the trivial group, then $g^{ah} = g$ for each $g
\in P$; that is, the maps $g \mapsto g^a$ and $g \mapsto g^h$ are
inverses on $P$. Since $a$ is $p$-regular and does not centralize
$P$, the map $g \mapsto g^a$ is a nonidentity $p$-regular
permutation of the elements of $P$. Then its inverse should also be
a nonidentity $p$-regular permutation, but this is impossible, since
$h$ is a $p$-element. Thus $\varphi(P)$ is not the trivial group.

Since $P$ is a normal $p$-subgroup of $C_G(S)$ and $h$ is a
$p$-element, $\<h, P\>$ is a $p$-group. In particular, $h\varphi(g)$
is a $p$-element of $C_G(S)$ for every $g \in P$. Additionally, for
any $g \in P$ we have
\[
(ah)^g = g^{-1} \cdot (ah) \cdot g = (ah) \cdot (g^{-1})^{ah}g = ah\varphi(g)
\]
Since $ah \in \C$, it follows that $ah\varphi(g) \in \C$. Then
$h\varphi(g) \in M$, and since $P$ is abelian, we in fact have
$h\varphi(g) \in M_h$. We may therefore define a right action of
$\varphi(G)$ on $M_h$ by setting $h.\varphi(g) = h\varphi(g)$. This
is clearly a free action, and $\varphi(G)$ is a nontrivial
$p$-group, so it follows that $M_h$ has size divisible by $p$.

The last statement now follows from Lemma \ref{exp-block}.
\end{proof}

\section{The symmetric and alternating groups}\label{sec-symalt}

In this section, we apply Theorem \ref{exp-coef-zero} to the
symmetric and alternating groups. We begin with some well-known
results about Sylow subgroups, centralizers, and conjugation in
these groups.

\begin{lem}\label{sym-syl}
Let $P$ be a Sylow $p$-subgroup of $S_n$, and write $n =
\sum_{i=0}^k d_ip^i$ with $0 \leq d_i \leq p-1$ for each $i$. Then
$P$ is isomorphic to the direct product $\prod_{i=1}^k (W_i)^{d_i}$
where $W_i$ is isomorphic to a Sylow $p$-subgroup of $S_{p^i}$. The
action on $\{1, \ldots, n\}$ of each factor $W_i$ in $S$ has a
unique nontrivial orbit which has size $p^i$, on which $W_i$ acts
using its action as a subgroup of $S_{p^i}$, and two distinct
factors of $S$ have disjoint nontrivial orbits.
\end{lem}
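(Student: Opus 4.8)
The plan is to construct one explicit Sylow $p$-subgroup with the asserted structure and orbit behaviour, and then transfer both to an arbitrary Sylow $p$-subgroup $P$ using the fact that all Sylow $p$-subgroups of $S_n$ are conjugate. First I would partition $\{1, \ldots, n\}$ into disjoint blocks: for each $i$ with $1 \leq i \leq k$ take $d_i$ blocks of size $p^i$, and leave the remaining $d_0$ points as fixed singletons. This is possible precisely because $\sum_{i=0}^k d_ip^i = n$. On each block $B$ of size $p^i$ I would choose a Sylow $p$-subgroup of the symmetric group $S_B \cong S_{p^i}$; since the blocks are disjoint these subgroups commute elementwise and generate their internal direct product inside $S_n$. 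Calling this subgroup $P_0$, we have by construction $P_0 \cong \prod_{i=1}^k (W_i)^{d_i}$ with each $W_i$ a Sylow $p$-subgroup of $S_{p^i}$.

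The key computation is that $P_0$ is in fact a Sylow $p$-subgroup, i.e.\ that $|P_0|$ equals the full $p$-part of $|S_n| = n!$. By Legendre's formula the $p$-adic valuation of $n!$ equals $(n-s)/(p-1)$, where $s = \sum_{i=0}^k d_i$ is the base-$p$ digit sum of $n$; the same formula applied to $p^i$ (whose digit sum is $1$) gives $|W_i| = p^{(p^i-1)/(p-1)}$. Thus
\[
\log_p|P_0| = \sum_{i=1}^k d_i\,\frac{p^i-1}{p-1}
= \frac{1}{p-1}\left(\sum_{i=1}^k d_ip^i - \sum_{i=1}^k d_i\right)
= \frac{(n-d_0)-(s-d_0)}{p-1} = \frac{n-s}{p-1},
\]
which is exactly $v_p(n!)$. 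Hence $P_0$ is Sylow, and since Sylow $p$-subgroups of $S_n$ are conjugate, the given $P$ equals $P_0^g$ for some $g \in S_n$, so $P \cong P_0 \cong \prod_{i=1}^k (W_i)^{d_i}$.

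It remains to read off the orbit structure. I would first record that a Sylow $p$-subgroup of $S_{p^i}$ is transitive on its $p^i$ points: a $p^i$-cycle is a $p$-element, so it lies in some Sylow $p$-subgroup, which is then transitive, and conjugacy of Sylow subgroups forces all of them to be transitive. Consequently each factor $W_i$ of $P_0$ acts on its block as a Sylow $p$-subgroup of the symmetric group on that block — giving a single nontrivial orbit of size $p^i$ on which it acts as a subgroup of $S_{p^i}$ — while fixing every point outside, and the blocks for distinct factors are disjoint by construction. Conjugating by $g$ carries $P_0$, its factor decomposition, and this entire orbit picture over to $P$ verbatim, since conjugation in $S_n$ merely relabels points and therefore sends orbits of size $p^i$ to orbits of size $p^i$ while preserving disjointness.

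The only genuinely delicate points are the digit-sum bookkeeping in the order computation and the transitivity of the Sylow subgroup of $S_{p^i}$; both are routine once set up. The remaining care is organizational, namely tracking how the explicit direct-product decomposition and its orbits are permuted by the conjugating element $g$ so that each of the three assertions (isomorphism type, factorwise orbit sizes, and disjointness) survives the transfer from $P_0$ to $P$.
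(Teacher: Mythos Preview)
The paper does not actually prove this lemma; it is stated without proof as one of several ``well-known results about Sylow subgroups, centralizers, and conjugation'' at the start of Section~\ref{sec-symalt}. Your argument is the standard one and is correct: the Legendre digit-sum computation verifies that the constructed direct product has the right order, the $p^i$-cycle trick gives transitivity of a Sylow $p$-subgroup of $S_{p^i}$, and conjugacy of Sylow subgroups transports the orbit picture to an arbitrary $P$. Nothing is missing.
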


\begin{lem}\label{alt-2-syl}
Let $W$ be a Sylow 2-subgroup of $S_n$. Then $W' = W \cap A_{2^k}$
is a Sylow 2-subgroup of $A_n$, $W'$ has index 2 in $W$, and if $n
\geq 4$, $W$ and $W'$ have the same orbits in $\{1, \ldots, n\}$.
\end{lem}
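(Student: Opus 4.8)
The plan is to treat the three assertions in turn, writing $W' = W\cap A_n$ for the subgroup of even permutations in $W$, i.e.\ the kernel of the sign homomorphism restricted to $W$. For the index and Sylow claims I would argue by orders. For $n\geq 2$ we have $|A_n| = |S_n|/2$, so the $2$-part of $|A_n|$ is exactly half the $2$-part of $|S_n|$. If $W$ were contained in $A_n$, it would be a $2$-subgroup of $A_n$ of order equal to the full $2$-part of $|S_n|$, which exceeds the $2$-part of $|A_n|$; this is impossible. Hence the sign homomorphism is surjective on $W$, so $W'$ has index $2$ in $W$. Then $W'\leq A_n$ is a $2$-group whose order is the $2$-part of $|A_n|$, so $W'$ is a Sylow $2$-subgroup of $A_n$.

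For the orbit claim, note first that $W'$ is normal of index $2$ in $W$, so the $W'$-orbits refine the $W$-orbits and, within any single $W$-orbit $O$, the group $W$ permutes the $W'$-suborbits transitively. A standard index computation shows that $O$ is a single $W'$-orbit precisely when the point stabilizer $\mathrm{Stab}_W(x)$ (for $x\in O$) is not contained in $W'$, i.e.\ when it contains an odd permutation; otherwise $O$ splits into exactly two $W'$-orbits. Fixed points of $W$ are of course fixed by $W'$ as well, so it suffices to prove: for $n\geq 4$ and every nontrivial $W$-orbit $O$ with $x\in O$, the group $W$ contains an odd permutation fixing $x$.

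To produce such a permutation I would use the orbit structure from Lemma \ref{sym-syl}: with $p=2$ the digits $d_i$ lie in $\{0,1\}$, so the nontrivial orbits of $W$ have distinct sizes $2^i$ ($i\geq 1$), each carrying the action of a factor isomorphic to a Sylow $2$-subgroup of $S_{2^i}$, and there is at most one fixed point. The order count from the first part, applied to $S_{2^i}$, shows that such a factor is never contained in $A_{2^i}$, i.e.\ contains an odd permutation, for every $i\geq 1$. If $|O| = 2^i$ with $i\geq 2$, I would split $O$ into two halves of size $2^{i-1}$ via the wreath-product structure of the factor and take an odd permutation supported on the half not containing $x$; it fixes $x$ and all points outside $O$, and its parity in $S_n$ equals its (odd) parity on that half. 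If instead $|O| = 2$, its own point stabilizer is trivial, so I must borrow an odd permutation from elsewhere: since $n\geq 4$ and the fixed points contribute at most one point, there is necessarily another nontrivial orbit $O'$, and an odd permutation of $W$ supported on $O'$ fixes $x$ and is odd in $S_n$. In either case $\mathrm{Stab}_W(x)$ meets the odd permutations, so $O$ is a single $W'$-orbit, and the orbits of $W$ and $W'$ coincide.

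The main obstacle is the last, $|O|=2$, case: here the hypothesis $n\geq 4$ is used in an essential way (it fails for $n=3$, where a Sylow $2$-subgroup is generated by a single transposition and $W'$ is trivial), and the argument must locate an odd permutation outside $O$ rather than inside its stabilizer. Verifying that the borrowed permutation indeed has odd sign in $S_n$---which rests on the fact that a permutation supported on a set has the same parity whether read in $S_n$ or in the symmetric group on that set---is the point that requires care.
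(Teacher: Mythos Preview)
Your argument is correct. For the first two claims you give the standard order computation, which is what the paper dismisses as ``well-known''. For the orbit claim, however, you take a genuinely different route. The paper argues pointwise: given $i,j$ in the same $W$-orbit and $\sigma\in W$ with $\sigma(i)=j$, it picks any two further points $k,l\notin\{i,j\}$ (available since $n\ge4$), sets $\tau=(k\ l)$, and observes that one of $\sigma$, $\sigma\tau$ is even and still sends $i$ to $j$. You instead pass through the stabilizer criterion---a $W$-orbit $O$ remains a single $W'$-orbit iff $\mathrm{Stab}_W(x)\not\subseteq W'$---and then manufacture an odd stabilizing element from the iterated wreath-product structure of the direct factor of $W$ acting on $O$ (or, when $|O|=2$, from a neighbouring factor, using that the binary digits $d_i\in\{0,1\}$ force at most one fixed point). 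Your approach is longer but more structural, and in particular it makes explicit that the odd correcting element actually lies in $W$; the paper's one-line argument, as written, takes $\tau$ to be an \emph{arbitrary} transposition on $\{k,l\}$ and never checks that $\tau\in W$, which is what is needed for $\sigma\tau\in W'$. Your case split on $|O|$ also isolates precisely where the hypothesis $n\ge4$ is used.
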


\begin{proof}
The first two parts are well-known. For the last part, let $i$ and
$j$ be two elements of $\{1, \ldots, n\}$ that are in the same orbit
under $W$. Pick two distinct elements $k$ and $l$ of $\{1, \ldots,
n\}$ that are both different from both $i$ and $j$; this is possible
since $n \geq 4$. Pick a $\sigma \in W$ such that $\sigma(i) = j$,
and let $\tau$ be the transposition interchanging $k$ and $l$; then
either $\sigma$ or $\tau\sigma$ is even, and $\sigma\tau(i) =
\sigma(i) = j$. Then $i$ and $j$ lie in the same orbit of $W'$.
\end{proof}

\begin{lem}\label{sym-cent}
Let $a \in S_n$ be an element that has cycle type
$c_1^{m_1}c_2^{m_2}\cdots c_r^{m_r}$. Then $C_{S_n}(a)$ is
isomorphic to $\prod_{i=1}^r C_{c_i} \wr S_{m_i}$. The embedding of
each factor into $G$ may be described thus: Fix an $l$ with $1 \leq
l \leq r$, let $M \subseteq \{1, \ldots, n\}$ be the set of points
that are moved by the $m_l$ $c_l$-cycles of $a$, and relabel the
points in $M$ by elements of $\{1, \ldots, m_l\} \times \Z/c_l$ in
such a way that $a((x,y)) = (x,y+1)$. Then $C_{c_l} \wr S_{m_l}$
acts trivially on the points of $\{1, \ldots, n\}$ outside $M$, the
$m_l$ copies of $C_{c_l}$ are generated by the elements $a_1,
\ldots, a_{m_l}$ defined by $a_x((x,y)) = (x,y+1)$ and $a_x((z,y)) =
(z,y)$ for $z \neq x$, and the subgroup $S_{m_i}$ acts on $M$ by
$\sigma((x,y)) = (\sigma(x),y)$.
\end{lem}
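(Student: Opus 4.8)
The plan is to realize $C_{S_n}(a)$ explicitly via the conjugation formula for cycles. Recall that for any $\sigma \in S_n$ and any cycle $(x_1\,x_2\,\cdots\,x_c)$ one has $\sigma(x_1\,x_2\,\cdots\,x_c)\sigma^{-1} = (\sigma(x_1)\,\sigma(x_2)\,\cdots\,\sigma(x_c))$. Writing $a$ as a product of its disjoint cycles and invoking uniqueness of the disjoint cycle decomposition, it follows that $\sigma a \sigma^{-1} = a$ precisely when $\sigma$ permutes the cycles of $a$ among themselves as a set of cycles. Since conjugation preserves cycle length, such a $\sigma$ sends each $c_l$-cycle to a $c_l$-cycle; in particular it preserves the point set $M = M_l$ moved by the $m_l$ $c_l$-cycles. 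Thus every element of $C_{S_n}(a)$ respects the partition $\{1,\dots,n\} = \bigsqcup_l M_l$ (the fixed points of $a$ forming the part belonging to the $c=1$ cycles), and acts independently on the different $M_l$. This yields an internal direct product $C_{S_n}(a) = \prod_l C_l$, where $C_l$ is the group of permutations of $M_l$ commuting with $a|_{M_l} = \prod_{x=1}^{m_l} a_x$.

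Next I would identify each $C_l$ with $C_{c_l}\wr S_{m_l}$ through the stated embedding. Under the relabelling of $M_l$ by $\{1,\dots,m_l\}\times \Z/c_l$ with $a((x,y)) = (x,y+1)$, the elements $a_1,\dots,a_{m_l}$ have pairwise disjoint supports, each of order $c_l$, so they generate a copy of $(C_{c_l})^{m_l}$; and the permutations $\sigma\colon (x,y)\mapsto (\sigma(x),y)$ for $\sigma \in S_{m_l}$ satisfy $\sigma a_x \sigma^{-1} = a_{\sigma(x)}$, which is exactly the defining action of the top group of a wreath product on its base. Hence $\langle a_1,\dots,a_{m_l}, S_{m_l}\rangle$ is a homomorphic image of $C_{c_l}\wr S_{m_l}$; it is faithful, since an element acting trivially on $M_l$ must first fix every block $\{x\}\times\Z/c_l$ (forcing the $S_{m_l}$-part to be trivial) and then act trivially inside each block (forcing the base to be trivial). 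A direct check that each generator commutes with $a|_{M_l} = \prod_x a_x$ (for the $S_{m_l}$-part, $\sigma(\prod_x a_x)\sigma^{-1} = \prod_x a_{\sigma(x)} = \prod_x a_x$ after reindexing) shows this copy of $C_{c_l}\wr S_{m_l}$ lies inside $C_l$.

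It remains to prove that this inclusion is an equality, which I regard as the crux. I would argue directly: given $\tau \in C_l$, the permutation it induces on the cycles $a_1,\dots,a_{m_l}$ is some $\sigma\in S_{m_l}$, and after multiplying $\tau$ by the inverse of the corresponding top-group element we may assume $\tau$ fixes each $a_x$, hence preserves each block and commutes with the single $c_l$-cycle $a_x$ on it. The centralizer of a single $c_l$-cycle on its support is just the cyclic group it generates: the identity $\tau(x,y) = \tau a_x^{y}(x,0) = a_x^{y}\tau(x,0)$ shows that $\tau|_{\text{block }x}$ is the power of $a_x$ determined by $\tau(x,0)$. Thus $\tau$ is a product of powers of the $a_x$, i.e. lies in the base group, giving $C_l = C_{c_l}\wr S_{m_l}$. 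Taking the product over $l$ (the fixed points contributing $C_1\wr S_{m} = S_{m}$) produces the claimed isomorphism. As a consistency check one may compare orders: $\bigl|\prod_l C_{c_l}\wr S_{m_l}\bigr| = \prod_l c_l^{m_l}\, m_l!$, which equals $|C_{S_n}(a)| = n!/|[a]|$ by the standard count of the conjugacy class of cycle type $c_1^{m_1}\cdots c_r^{m_r}$. The main obstacle is purely organizational: keeping the wreath-product bookkeeping straight and cleanly reducing surjectivity to the one-cycle centralizer computation.
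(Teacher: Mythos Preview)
The paper does not actually prove this lemma: it is listed among the ``well-known results about Sylow subgroups, centralizers, and conjugation in these groups'' at the start of Section~\ref{sec-symalt} and is stated without proof, so there is no approach of the paper's to compare against.

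Your argument is the standard one and is correct. The only places worth tightening are phrasing rather than substance. When you pass from ``$\tau$ induces $\sigma$ on the cycles'' to ``$\tau$ fixes each $a_x$'' after dividing by the top-group element, what you are really using is that $\tau$ permutes the \emph{orbits} (blocks $\{x\}\times\Z/c_l$) of $a|_{M_l}$; once $\sigma^{-1}\tau$ preserves each block, the disjointness of supports in $\prod_x a_x = (\sigma^{-1}\tau)\bigl(\prod_x a_x\bigr)(\sigma^{-1}\tau)^{-1}$ forces $(\sigma^{-1}\tau)a_x(\sigma^{-1}\tau)^{-1}=a_x$ for each $x$, which is what feeds into the one-cycle centralizer computation. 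Your identity $\tau(x,y)=a_x^{y}\tau(x,0)$ then cleanly gives that the block-preserving part lies in the base $(C_{c_l})^{m_l}$. The order check at the end is a good sanity check but is not needed once you have shown both inclusions.
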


\begin{lem}\label{sym-konj}
Let $x$ and $y$ be elements of $S_m$, and regard $S_m$ as a subgroup
of $S_n$ for some $n \geq m$. If $x$ and $y$ are conjugate in $S_n$,
then they are conjugate in $S_m$. In particular, if $\C$ is a
conjugacy class of $S_n$, then $\C \cap S_m$ is either empty or a
conjugacy class of $S_m$.
\end{lem}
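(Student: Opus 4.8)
The plan is to reduce everything to the classical fact that two elements of a symmetric group are conjugate if and only if they have the same cycle type, where the cycle type is the multiset of lengths of all disjoint cycles, including the trivial ($1$-)cycles corresponding to fixed points. First I would record this characterization for both $S_n$ and $S_m$. The only point requiring genuine care is the correct bookkeeping of fixed points when the cycle type is computed inside $S_n$ as opposed to inside $S_m$.

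For the main statement, observe that under the embedding $S_m \leq S_n$ every element of $S_m$ fixes each of the points $m+1, \ldots, n$. Thus, viewed as an element of $S_n$, each of $x$ and $y$ has exactly $n-m$ trivial cycles beyond the cycles it induces on $\{1, \ldots, m\}$, and all of its nontrivial cycles lie within $\{1, \ldots, m\}$. If $x$ and $y$ are conjugate in $S_n$, their cycle types as elements of $S_n$ coincide; deleting the $n-m$ common trivial cycles shows that their cycle types as elements of $S_m$ coincide as well. By the characterization applied inside $S_m$, $x$ and $y$ are therefore conjugate in $S_m$. (If one prefers an explicit conjugator, one can build a $\tau \in S_m$ directly by matching the cycles of $x$ on $\{1,\ldots,m\}$ with those of $y$ of equal length.)

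For the \emph{in particular} clause, suppose $\C \cap S_m$ is nonempty and fix some $x \in \C \cap S_m$. Given any $y \in \C \cap S_m$, both $x$ and $y$ lie in the single $S_n$-conjugacy class $\C$, so they are conjugate in $S_n$ and hence, by the first part, conjugate in $S_m$; thus $y$ lies in the $S_m$-conjugacy class of $x$. Conversely, any $S_m$-conjugate of $x$ is a fortiori an $S_n$-conjugate of $x$ and still lies in $S_m$, so it belongs to $\C \cap S_m$. Therefore $\C \cap S_m$ is exactly the $S_m$-conjugacy class of $x$.

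As for difficulty, this lemma is essentially a bookkeeping exercise: the substantive input is the standard cycle-type criterion for conjugacy, and the only step where an error could slip in is the handling of the $n-m$ fixed points common to $x$ and $y$, which must be subtracted off when passing between cycle types in $S_n$ and in $S_m$.
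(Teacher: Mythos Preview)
Your argument is correct, but it proceeds differently from the paper. You invoke the standard cycle-type criterion for conjugacy in symmetric groups and then do the (easy) bookkeeping that the $S_n$-cycle type of an element of $S_m$ is its $S_m$-cycle type padded with $n-m$ extra $1$-cycles. The paper instead gives a direct, constructive argument: starting from an $s\in S_n$ with $x^s=y$, it shows that any $i\in\{1,\ldots,m\}$ with $s(i)\notin\{1,\ldots,m\}$ must be a fixed point of $y$, then chooses $t\in S_m$ agreeing with $s$ on all non-fixed points of $y$ and checks that $s^{-1}t$ commutes with $y$, whence $x^t=y$. Your route is shorter and more conceptual once the cycle-type criterion is granted; the paper's route is self-contained and produces an explicit conjugator in $S_m$ without appealing to that classification. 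Either approach feeds equally well into the subsequent Lemma on $A_m\leq A_n$, which only uses the conclusion.
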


\begin{proof}
Let $x$ and $y$ be elements of $S_m$, and suppose that there is an
$s \in S_n$ such that $x^s = y$. Let $i \in \{1, \ldots, m\}$ be any
element such that $s(i) \not\in \{1, \ldots, m\}$; then $s(i)$ is a
fixed point of $x$, and we have $y(i) = x^s(i) = s^{-1}xs(i) =
s^{-1}s(i) = i$. Then $i$ is a fixed point of $y$; conversely, if $i
\in \{1, \ldots, m\}$ is not a fixed point of $y$, then $s(i) \in
\{1, \ldots, m\}$. We may therefore pick an element $t \in S_m$ such
that $t(i) = s(i)$ if $i$ is not a fixed point of $y$. Then any $i
\in \{1, \ldots, n\}$ is a fixed point of either $y$ or $s^{-1}t$,
since if it is not a fixed point of $y$, we have $s^{-1}t(i) =
s^{-1}s(i) = i$. Then $y$ and $s^{-1}t$ commute, and we get $y =
y^{s^{-1}t} = x^{ss^{-1}t} = x^t$, so $x$ and $y$ are conjugate in
$S_m$. The second part is immediate.
\end{proof}

\begin{lem}\label{alt-konj}
Let $x$ and $y$ be elements of $A_m$, and regard $A_m$ as a subgroup
of $A_n$ for some $n \geq m$. If $x$ and $y$ are conjugate in $A_n$,
then either they are conjugate in $A_m$, or there exists an odd
element $g \in S_m$ such that $x = y^g$. In particular, if $\C$ is a
conjugacy class of $A_n$, then $\C \cap A_m$ is either empty, a
conjugacy class of $A_m$, or a union of two conjugacy classes $\C_1$
and $\C_2$ of $A_m$ such that $\C_1^g = \C_2$ for every odd element
$g \in S_m$.
\end{lem}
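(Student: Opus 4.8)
The plan is to prove Lemma~\ref{alt-konj} by leveraging the already-established Lemma~\ref{sym-konj} about conjugacy in symmetric groups, and then tracking how conjugacy classes split or remain intact when passing from $S_m$ to its index-2 subgroup $A_m$. The key structural fact I will rely on is the standard dichotomy for conjugacy classes in alternating groups: a conjugacy class of $S_m$ contained in $A_m$ either remains a single $A_m$-class or splits into exactly two $A_m$-classes, and it splits precisely when no odd permutation centralizes the element (equivalently, when the centralizer in $S_m$ is already contained in $A_m$). Although this is classical, I may need to invoke it or give a short argument, since the surrounding text only develops $S_m$-conjugacy explicitly.

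First I would take $x, y \in A_m$ that are conjugate in $A_n$, say $x^g = y$ with $g \in A_n$. Since $A_n \subseteq S_n$, the elements $x$ and $y$ are \emph{a fortiori} conjugate in $S_n$, so by Lemma~\ref{sym-konj} they are conjugate in $S_m$: there is some $s \in S_m$ with $x^s = y$. Now I split into two cases according to the parity of $s$. If $s$ is even, then $s \in A_m$ and $x$ and $y$ are conjugate in $A_m$ directly, giving the first alternative. If $s$ is odd, then setting $g = s^{-1}$ (which is also odd) gives $x = y^g$ with $g \in S_m$ odd, which is the second alternative. This already establishes the first assertion of the lemma.

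For the final ``In particular'' statement, I would start from a conjugacy class $\C$ of $A_n$ and consider $\C \cap A_m$. If this intersection is empty there is nothing to prove. Otherwise, pick $x \in \C \cap A_m$; by the first part, every other $y \in \C \cap A_m$ is either $A_m$-conjugate to $x$ or satisfies $x = y^g$ for some odd $g \in S_m$. This shows $\C \cap A_m$ is contained in the union of the $A_m$-class of $x$ and the set $\{ x^h : h \in S_m \text{ odd}\}$. The latter set is itself a single $A_m$-class (it is the image of the $A_m$-class of $x^{g_0}$ under left-translation structure, or more directly: fixing one odd $g_0$, every odd $g$ equals $g_0$ times an even element, so $\{x^h\}$ for odd $h$ is exactly the $A_m$-conjugates of $x^{g_0}$). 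Hence $\C \cap A_m$ is either a single $A_m$-class (when these two classes coincide, i.e.\ the $S_m$-class does not split) or the union of exactly two classes $\C_1, \C_2$ with $\C_1^g = \C_2$ for every odd $g$.

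The main obstacle I anticipate is the bookkeeping in the last step: verifying cleanly that the set of odd-conjugates of $x$ forms a \emph{single} $A_m$-class and that the relation $\C_1^g = \C_2$ holds for \emph{every} odd $g$ (not just one). The cleanest way is to observe that for any two odd elements $g, g' \in S_m$, the element $g^{-1}g'$ is even, so conjugation by odd elements is well-defined as a single operation on $A_m$-classes (swapping $\C_1$ and $\C_2$ when the class splits, fixing the class when it does not); this is exactly the content of Lemma~\ref{cent-orb} applied to the two-element quotient $S_m / A_m$ acting on the set of $A_m$-classes, and I would phrase the argument in those terms to avoid ad hoc casework.
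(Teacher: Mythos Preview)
Your approach matches the paper's: reduce to $S_m$-conjugacy via Lemma~\ref{sym-konj}, then use the index-two inclusion $A_m \lhd S_m$ to bound the number of $A_m$-classes appearing in $\C \cap A_m$. One small slip in your final case analysis: you assert that $\C \cap A_m$ equals the union of both $A_m$-classes whenever the $S_m$-class splits, but it can happen that only one of the two pieces lies in $\C$ (for instance $m=7$, $n=8$, $x$ a $7$-cycle, and $\C$ one of the two $A_8$-classes of $7$-cycles); this is harmless for the lemma, since all you actually need is that $\C \cap A_m$ is closed under $A_m$-conjugation and hence is a union of at most two full $A_m$-classes. Also, Lemma~\ref{cent-orb} concerns centralizers permuting orbits, not quotients by normal subgroups, so it is not quite the right citation---just argue directly that conjugation by $S_m$ permutes the $A_m$-classes and factors through $S_m/A_m$, which is exactly what the paper does.
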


\begin{proof}
Suppose $x$ and $y$ are elements of $A_m$ that are conjugate in
$A_n$; then by Lemma \ref{sym-konj}, they are conjugate in $S_m$.
This is the first part. For the second part, we note that $S_m$ acts
transitively on $\C \cap A_m$, and since $A_m$ is normal in $S_m$,
$S_m$ acts on the set of $A_m$-orbits in $\C \cap A_m$ (that is, the
set of $A_m$-conjugacy classes contained in $\C \cap A_m$). $A_m$
acts trivially on this set, while $S_m$ acts transitively, so
$S_m/A_m$ acts transitively on this set. Since $S_m/A_m$ has order
2, there are at most two $A_m$-conjugacy classes in $\C \cap A_m$,
and if there are two, they are interchanged by the nontrivial
element of $S_m/A_m$.
\end{proof}

\begin{thm}\label{symalt-idem}
Let $G = S_n$ or $G = A_n$, and let $a$ be a $p$-regular element of
$G$ containing at least $p$ $c$-cycles for some $c > 1$. In the case
$G = A_n$ and $p = 2$, assume further that $a$ contains at least 4
$c$-cycles, or there exists a number $d \neq c$ such that $a$
contains at least 2 $d$-cycles. Then $a$ does not appear in any
central idempotent of $RG$.
\end{thm}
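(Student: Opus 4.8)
The plan is to invoke Theorem~\ref{exp-coef-zero}: it suffices to exhibit a Sylow $p$-subgroup $S$ of $C_G(a)$ together with a normal abelian $p$-subgroup $P$ of $C_G(S)$ that $a$ does not centralize. Write the cycle type of $a$ as $\prod_i c_i^{m_i}$, with the distinguished length $c=c_{i_0}$ satisfying $m:=m_{i_0}\geq p$; since $a$ is $p$-regular, every $c_i$ is prime to $p$. By Lemma~\ref{sym-cent}, $C_{S_n}(a)\cong\prod_i C_{c_i}\wr S_{m_i}$, and as $p\nmid c_i$ the subgroup $\tilde S:=\prod_i\Syl_p(S_{m_i})$, acting by permuting the $m_i$ blocks of each cycle type, is a Sylow $p$-subgroup of $C_{S_n}(a)$. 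I fix the coordinates of Lemma~\ref{sym-cent} on the $c$-part, so that its points are $\{1,\dots,m\}\times\Z/c$ with $a(x,y)=(x,y+1)$ and the block permutations acting on the first coordinate. Note $a\in C_{S_n}(\tilde S)$.

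I first handle $G=S_n$ (and, after a parity check, $G=A_n$ for odd $p$) with $S=\tilde S$. For each nontrivial $S$-orbit $\Omega$ the restriction $S|_\Omega$ is a transitive $p$-group on $|\Omega|=p^j$ points, so $D_\Omega:=C_{\mathrm{Sym}(\Omega)}(S|_\Omega)$ is a $p$-group (abelian for the groups at hand) generated by a fixed-point-free element $z_\Omega$. I put $P=\prod_\Omega D_\Omega$ over all nontrivial $S$-orbits; distinct factors have disjoint support, so $P$ is abelian, and it is normal in $C_{S_n}(S)$: an element fixing every orbit restricts on each $\Omega$ into $D_\Omega$, hence lies in $P$, while by Lemma~\ref{cent-orb} a general element permutes the orbits and carries $D_\Omega$ to the corresponding $D_{\Omega'}$. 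Since $m\geq p$, Lemma~\ref{sym-syl} supplies a nontrivial $S$-orbit $B$ on the $m$ blocks, yielding $c$ orbits $\Omega_y=B\times\{y\}$ $(y\in\Z/c)$ that $a$ permutes cyclically; as $c>1$ we get $a z_{\Omega_0}a^{-1}=z_{\Omega_1}\neq z_{\Omega_0}$, so $a$ does not centralize $P$, and Theorem~\ref{exp-coef-zero} applies. For $G=A_n$ with $p$ odd, every $z_\Omega$ has odd order and is fixed-point-free, hence even, so $P\subseteq A_n$ and $S\subseteq A_n$; the same $P$ works verbatim in $C_{A_n}(S)=C_{S_n}(S)\cap A_n$.

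The genuinely different case is $G=A_n$ with $p=2$, and this is where I expect the real work. Now every $c_i$ is odd, so a block transposition is an odd permutation and $C_{S_n}(a)\cap A_n$ has index two in $C_{S_n}(a)$; consequently a Sylow $2$-subgroup of $C_{A_n}(a)$ is $S=\tilde S\cap A_n$ rather than $\tilde S$. I again form the normal abelian $2$-group $P=\prod_\Omega D_\Omega$ of $C_{S_n}(S)$ (built from the orbits of the smaller group $S$) and pass to $P'=P\cap A_n$, a normal abelian $2$-subgroup of $C_{A_n}(S)$. Since the generator $z_\Omega$ is a product of $|\Omega|/2$ transpositions, it is odd precisely when $|\Omega|=2$; thus an element $\prod_\Omega z_\Omega^{v_\Omega}$ lies in $P'$ exactly when $\sum_{|\Omega|=2}v_\Omega=0$, and $a$ acts on $P$ by permuting the $z_\Omega$ along the cyclic $\Z/c$-layering. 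It remains to produce an element of $P'$ that $a$ moves, and this is exactly what the two supplementary hypotheses secure.

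Concretely, if $a$ has at least $4$ $c$-cycles then Lemma~\ref{sym-syl} gives a block orbit $B$ of size $\geq 4$, and Lemma~\ref{alt-2-syl} shows it remains a single orbit under $S=\tilde S\cap A_n$; then $z_{\Omega_0}$ is already even, so $z_{\Omega_0}\in P'$ and $a z_{\Omega_0}a^{-1}=z_{\Omega_1}\neq z_{\Omega_0}$. Otherwise I use lengths $c\neq d$ each occurring at least twice: a size-two block orbit on the $c$-part gives layers $\Omega_0,\Omega_1,\Omega_2,\dots$, and here the second length $d$ supplies an odd block transposition whose product with the $c$-block transposition lies in $S$, so these size-two orbits do not split under $S$ and $S|_{\Omega_y}\cong C_2$. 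The element $z_{\Omega_0}z_{\Omega_1}$ is even, hence lies in $P'$, while $a(z_{\Omega_0}z_{\Omega_1})a^{-1}=z_{\Omega_1}z_{\Omega_2}\neq z_{\Omega_0}z_{\Omega_1}$ because $c$ is odd and greater than $1$, hence at least $3$, so $\Omega_2\neq\Omega_0$. In either sub-case $a$ fails to centralize $P'$, and Theorem~\ref{exp-coef-zero} finishes the proof. The main obstacle throughout is precisely this parity bookkeeping: arranging simultaneously that the relevant generators of $P$ are even and that the smaller Sylow subgroup $S=\tilde S\cap A_n$ still carries an orbit on which $a$ acts nontrivially.
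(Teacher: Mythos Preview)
Your argument is correct and, like the paper, proceeds by verifying the hypotheses of Theorem~\ref{exp-coef-zero}. The constructions differ in detail: the paper singles out one direct factor $W$ of the Sylow of $S_{m_k}$ and sets $P\cong Z(W)^{c_k}$, while you take $P=\prod_\Omega D_\Omega$ over \emph{all} nontrivial $S$-orbits; both choices yield a normal abelian $p$-subgroup that $a$ fails to centralize. For $A_n$ at $p=2$ the paper runs through several sub-cases, most of which first establish $C_{A_n}(S')=C_{A_n}(S)$ so as to recycle the $S_n$ construction verbatim, whereas your two-case split (a block orbit of size $\geq 4$, or two distinct cycle lengths of multiplicity $\geq 2$) works directly with the smaller Sylow $S=\tilde S\cap A_n$ and is more uniform. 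One small imprecision worth noting: you describe each $D_\Omega$ as cyclic, ``generated by a fixed-point-free element $z_\Omega$'', but when $S|_\Omega$ is only a Sylow $2$-subgroup of $\mathrm{Alt}(\Omega)$ rather than of $\mathrm{Sym}(\Omega)$---which occurs in your first $p=2$ sub-case when $m_{i_0}$ is a power of $2$ and all other multiplicities are at most $1$---the group $D_\Omega$ can be larger (for instance $D_\Omega=V_4$ when $|\Omega|=4$). This does not damage the proof: a short centralizer count gives $|D_\Omega|\leq 4$ in that situation, so $D_\Omega$ is still abelian, and the specific element you need (the central involution of the full Sylow of $\mathrm{Sym}(\Omega)$, a product of $|\Omega|/2$ transpositions) always lies in $D_\Omega$ and is even once $|\Omega|\geq 4$.
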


\begin{proof}
This will be proven by showing that $a$ satisfies the condition in
Theorem \ref{exp-coef-zero}. We first consider the case $G = S_n$.

Let $a$ have cycle type $c_1^{m_1}c_2^{m_2}\cdots c_r^{m_r}$, and
write $C_{S_n}(a)$ as $\prod_{i=1}^r C_{c_i} \wr S_{m_i}$ as in
Lemma \ref{sym-cent}. Choose a $k$ such that $c_k > 1$ and $m_k \geq
p$; this is possible by assumption. Since $a$ is $p$-regular, $c_k$
is not divisible by $p$, so the subgroup $S_{m_k}$ of $C_{c_k} \wr
S_{m_k}$ contains a Sylow $p$-subgroup $T$ of $C_{c_k} \wr S_{m_k}$.
Let $S$ be a Sylow $p$-subgroup of $C_{S_n}(a)$ containing $T$;
since $C_{c_k} \wr S_{m_k}$ is a direct factor of $C_{S_n}(a)$, $T$
is then a direct factor of $S$. By the above, there is a complement
of $C_{c_k} \wr S_{m_k}$ in $C_{S_n}(a)$ that acts trivially on $M$,
so there is a complement of $T$ in $S$ that acts trivially on $M$.

Let $W$ be a direct factor of $T$ in the decomposition given in
Lemma \ref{sym-syl}; such a factor exists because $m_k \geq p$.
Under the usual action of $S_{m_k}$ on $\{1, \ldots, m_k\}$, there
is then a subset $N$ of $\{1, \ldots, m_k\}$ with $|N|$ a nonzero
power of $p$ such that $W$ acts transitively on $N$ and fixes all
other points. Additionally, there is a complement of $W$ in $T$ that
acts trivially on $N$. Transferring this to the action on $\{1,
\ldots, n\}$ by using the relabeling given in Lemma \ref{sym-cent},
we see that $W$ acts on $N \times \Z/c_k$ with orbits $N \times
\{1\}, N \times \{2\}, \ldots, N \times \{c_k\}$, and it acts
trivially on all other points of $\{1, \ldots, n\}$. Combining this
with the previous paragraph, we see that $W$ is a direct factor of
$S$, and there is a complement of $W$ in $S$ that acts trivially on
$N \times \Z/c_k$.

The group $W$ acts on $N$, and hence, so does $Z(W)$. We use this
action to define $c_k$ different actions (labelled by the elements
of $\Z/c_k$) of $Z(W)$ on $\{1, \ldots, n\}$. Still maintaining the
above relabeling, we first specify that $Z(W)$ in all cases acts
trivially on points outside $N \times \Z/c_k$. On $N \times \Z/c_k$,
the $i$'th action is defined by $\sigma((x,i)) = (\sigma(x),i)$ and
$\sigma((x,j)) = (x,j)$ for $j \neq i$. We now define the $c_k$
subgroups $Z_1, \ldots, Z_{c_k}$ of $G = S_n$ by letting $Z_i$ be
the image of the natural map from $Z(W)$ into $S_n$ constructed from
the $i$'th action. Then $Z_i$ acts transitively on $N \times \{i\}$
and trivially on all other points of $\{1, \ldots, n\}$. Since an
element of $Z(W)$ is completely defined by its action on $N$, each
$Z_i$ is isomorphic to $Z(W)$.

Since $N \times \{i\}$ and $N \times \{j\}$ are disjoint for $i \neq
j$, $Z_i$ and $Z_j$ centralize each other and have trivial
intersection. Define $P = \<Z_i \mid 1 \leq i \leq c_k\>$; $P$ is
then isomorphic to the direct product $\prod_{i=1}^{c_k} Z_i$.
Furthermore, each $Z_i$ is an nontrivial abelian $p$-group, since it
is isomorphic to $Z(W)$. Then $P$ is an nontrivial abelian
$p$-group.

It is straightforward to verify that $W$, considered as a subgroup
of $S_n$, centralizes the groups $Z_1, \ldots, Z_{c_k}$.
Additionally, $W$ has a complement in $S$ that acts trivially on $N
\times \Z/c_k$, and since the $Z_i$ act trivially outside $N \times
\Z/c_k$, this complement also centralizes the $Z_i$. Then $S$
centralizes the $Z_i$, so we have $Z_i \subseteq C_{S_n}(S)$. This
implies $P \subseteq C_{S_n}(S)$.

Now let $g \in C_{S_n}(S)$. Then $g$ centralizes $W$, so by Lemma
\ref{cent-orb}, $g$ permutes the sets $N \times \{1\}, N \times
\{2\}, \ldots, N \times \{c_k\}$. Let $i \in Z/c_k$ be arbitrary,
and choose $j \in Z/c_k$ such that $g^{-1}(N \times \{i\}) = N
\times \{j\}$; then $Z_i^g$ acts trivially on all points outside $N
\times \{j\}$. Let $z \in Z_i$; because of the way we have defined
$Z_i$, there exists an element $\tilde z \in Z(W)$ (now considered
as a subgroup of $S_n$) such that $z((x,i)) = \tilde z((x,i))$ for
all $x \in N$. Then we also have $z^g((x,j)) = \tilde z^g((x,j))$
for all $x \in N$, and since $g$ centralizes $W$, we have $\tilde
z^g = \tilde z$. Then $z^g((x,j)) = \tilde z((x,j))$, and $z^g$
fixes all points outside $N \times \{j\}$. Thus $z^g$ is an element
of $Z_j$, so $Z_i^g = Z_j$. Then $g$ permutes the groups $Z_1,
\ldots, Z_{c_k}$, so $g$ normalizes $P$. This implies that $P$ is
normal in $C_{S_n}(S)$.

Finally, we note that since the action of $a$ on $N \times \Z/c_k$
is given by $a((x,y)) = (x,y+1)$, we have $Z_i^a = Z_{i-1}$. As $c_k
> 1$, this implies that $a$ does not centralize $P$. Then $P$
satisfies all the conditions of Theorem \ref{exp-coef-zero}.

This completes the proof for $G = S_n$. In the case $p > 2$, we see
that every $p$-element of $S_n$ is contained in $A_n$; in
particular, the groups $S$ and $P$ constructed above are contained
in $A_n$. Further, $S$ is a Sylow $p$-subgroup of $C_{A_n}(a)$
because it is a Sylow $p$-subgroup of $C_{S_n}(a)$, and $P$ is
normal in $C_{A_n}(S)$ because it is normal in $C_{S_n}(S)$. We may
then apply Theorem \ref{exp-coef-zero} to prove the case $G = A_n$
and $p > 2$.

For the case $G = A_n$ and $p = 2$, we will need to modify the
proof. Let again $a$ have cycle type $c_1^{m_1}c_2^{m_2}\cdots
c_r^{m_r}$, and write $C_{S_n}(a) \cong \prod_{i=1}^r C_{c_i} \wr
S_{m_i}$. For each $i$, let $T_i$ be a Sylow 2-subgroup of
$S_{m_i}$. Since $a$ is 2-regular, each $c_i$ is odd, so $T_i$ is
also a Sylow 2-subgroup of $C_{c_i} \wr S_{m_i}$, and $S =
\prod_{i=1}^r T_i$ is a Sylow 2-subgroup of $C_{S_n}(a)$. The group
$S' = S \cap A_n$ is then a Sylow 2-subgroup of $C_G(a)$.

Suppose that we can show that $C_{A_n}(S') = C_{A_n}(S)$. We can
then take a $k$ with $m_k \geq 2$ and use it to construct the group
$P \cong \prod_{i=1}^{c_k} Z_i$ as above, although it will only be a
subgroup of $S_n$, not necessarily of $A_n$. If we set $P' = P \cap
A_n$, we have $P' \lhd C_{A_n}(S)$, since $P \lhd C_{S_n}(S)$. As
$C_{A_n}(S') = C_{A_n}(S)$, we get $P' \lhd C_{A_n}(S')$.
Additionally, $Z_i \times Z_{i+1}$ is a subgroup of $P$ of order at
least 4, and since $P'$ has index at most 2 in $P$, $P' \cap (Z_i
\times Z_{i+1})$ is nontrivial. As $(P' \cap (Z_i \times Z_{i+1}))^a
= P' \cap (Z_{i-1} \times Z_i)$ and $c_k \geq 3$, we find that $a$
does not centralize $P'$. We can then apply Theorem
\ref{exp-coef-zero}, completing the proof. This method can be used
to cover most subcases, but a few will need to be treated
separately.

We will need a small observation regarding the parity of elements in
$T_i$: if $x \in T_i$, then $x$ is an odd element of $S_n$ if and
only if it is an odd element of $S_{m_i}$. To see this, note that
the action of $x$ on $\{1, \ldots, n\}$ is given by identifying
$\{1, \ldots, m_i\}$ with $c_i$ pairwise disjoint subsets of $\{1,
\ldots, n\}$ and letting $x$ act on each of these using its action
on $\{1, \ldots, m_i\}$. Then $x$ in $S_n$ is the product of $c_i$
elements, each of which has the same parity as $x$ in $S_{m_i}$. As
$c_i$ is odd, the result follows.

Assume first that $a$ contains 4 $c$-cycles for some $c > 1$, so
there is a $k$ such that $m_k \geq 4$ and $c_k > 1$. By Lemma
\ref{sym-syl}, we may write $T_k$ as a direct product $\prod_{j=1}^t
W_j$ where each $W_j$ is a Sylow 2-subgroup of $S_{2^{n_j}}$ for
some $n_j$, and no two $n_j$ are equal. Assume without loss of
generality that $W_1$ has the largest order among the $W_j$; in
particular, the order of $W_1$ is at least 4, and the nontrivial
orbits of the action of $W_1$ on $\{1, \ldots, n\}$ have size at
least 4. We set $W = W_1$ and $W' = W \cap A_n$; by Lemma
\ref{alt-2-syl} and the above observation regarding parity of
elements, $W'$ is then a proper subgroup of $W$. Since $W$ has order
at least 4, we further have that $W'$ has the same orbits as $W$ in
$\{1, \ldots, n\}$.

Consider the case that there is an $l$ distinct from $k$ such that
$m_l \geq 2$, so we may pick an element $u \in T_l$ that is a
transposition when considered as an element of $S_{m_l}$. Then $u$
is odd when considered as an element of $S_n$, so $S'$ contains the
elements $wu$ where $w \in W$ but $w \not\in W'$. Since $u$ commutes
with every element of $W$, we may define the subgroup $W_u$ of $S'$
to consist of these elements together with $W'$. Let $x \in
C_{A_n}(S')$; then $x$ centralizes $W_u$, so it must permute the
orbits of $W_u$ in $\{1, \ldots, n\}$. The orbits of $W_u$ consist
of a number of orbits of size 2 from the action of $u$, and a number
of orbits from the action of $W$, plus some fixed points. Since the
orbits from $W$ have size at least 4, they are permuted separately
from the orbits from $u$. Then $x$ permutes the nontrivial orbits of
the action of $u$ alone, so it follows that $x$ actually commutes
with $u$. As $x$ then centralizes both $S'$ and $u$, it must
centralize $S$. Then we have $C_{A_n}(S') = C_{A_n}(S)$.

Now consider the case that $m_i < 2$ for $i \neq k$ but $t > 1$.
Then $S'$ contains those elements $(w_1,w_2) \in W \times W_2$ for
which $w_1$ and $w_2$ are both even or both odd. Similarly to the
previous case, we note that the orbits of $S' \cap (W \times W_2)$
consist of a number of orbits from $W$ and a number of orbits from
$W_2$, and the orbits from $W$ do not have the same size as the
orbits from $W_2$. As before, we then conclude that an element that
centralizes this group in fact centralizes $W \times W_2$, from
which it follows that $C_{A_n}(S') = C_{A_n}(S)$.

There remains the case that $m_i < 2$ for $i \neq k$ and $t = 1$.
Here $S = W$ and $S' = W'$, and $W'$ is nontrivial because $W$ has
order at least 4. Then $Z(W')$ is nontrivial, and we use this group
to construct a normal abelian $p$-subgroup of $C_{A_n}(S')$
satisfying the conditions of Theorem \ref{exp-coef-zero}, in the
same manner as in the proof of the case $G = S_n$.

Now consider the possibility that $m_i < 4$ for all $i$, but there
exists two distinct numbers $k$ and $l$ such that $m_k \geq 2$ and
$m_l \geq 2$. In this case, each nontrivial $T_i$ is cyclic of order
2, and the nontrivial element is a product of $c_i$ disjoint
transpositions. Suppose first that there exists a third number $m$,
distinct from both $k$ and $l$, such that $T_m$ has order 2. Let
$s_k$, $s_l$, and $s_m$ be the nontrivial elements of $T_k$, $T_l$,
and $T_m$, respectively. These are all odd, being the product of an
odd number of transpositions, so $s_ks_l$ and $s_ks_m$ are even and
therefore elements of $S'$. Now let $x \in C_{A_n}(S')$. The orbits
of $s_ks_l$ consist of $c_k$ orbits of size 2 from $s_k$ and $c_l$
orbits of size 2 from $s_l$, plus fixed points, and $x$ must permute
the orbits of size 2. The orbits from $s_k$ are also nontrivial
orbits of $s_ks_m$, while the orbits from $s_l$ are fixed points of
$s_ks_m$. As $x$ also centralizes $s_ks_m$, $x$ then cannot map an
orbit from $s_k$ into an orbit from $s_l$. Then $x$ permutes the
orbits from $s_k$, which implies that $x$ centralizes $s_k$. Then we
have $C_G(S') = C_G(S)$.

Finally we have the possibility that there is no such third number.
Then $S$ is isomorphic to $T_k \times T_l \cong C_2 \times C_2$, and
$S'$ is cyclic of order 2, with the nontrivial element being a
product of $c_k + c_l$ disjoint transposistions. Then we may write
$C_{S_n}(S') \cong S_{n-c_k-c_l} \times (C_2 \wr S_{c_k+c_l})$, and
$C_{A_n}(S') = C_{S_n}(S') \cap A_n$. Clearly $C_{S_n}(S')$ has the
normal subgroup $C_2^{c_k+c_l}$, and setting $P = C_2^{c_k+c_l} \cap
A_n$, we obtain a normal abelian 2-subgroup of $C_{A_n}(S')$. As
either $c_k \geq 3$ or $c_l \geq 3$, we see as in the first case
above that $a$ does not centralize $P$.
\end{proof}

\begin{lem}\label{symalt-idem-syl}
Let $G = S_n$ or $G = A_n$, and let $e$ be a block idempotent of
$G$. Then there exists a subset $M$ of $\{1, \ldots, n\}$ such that
a Sylow $p$-subgroup of $S_M$ (respectively $A_M$) is a defect group
of $e$.
\end{lem}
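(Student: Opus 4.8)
The plan is to combine Lemma~\ref{def-syl} with the restriction on cycle types furnished by Theorem~\ref{symalt-idem}. First I would let $P$ be a defect group of $e$ and apply Lemma~\ref{def-syl} to obtain an element $a \in G$ with nonzero coefficient in $e$ such that $P$ is a Sylow $p$-subgroup of $C_G(a)$. Since $e$ is a block idempotent and $a$ has nonzero coefficient in it, the corollary to Theorem~\ref{exp-coef} shows that $a$ is $p$-regular, and Theorem~\ref{symalt-idem} shows that $a$ cannot satisfy the hypotheses of that theorem. Thus $a$ has at most $p-1$ cycles of each length $c > 1$, and in the case $G = A_n$, $p = 2$ the sharper restriction holds that at most one length $c > 1$ occurs with multiplicity $\geq 2$, that multiplicity being $2$ or $3$, with every other length — including fixed points — occurring at most once. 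The candidate subset throughout will be $M = F$, the set of fixed points of $a$.

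To handle $G = S_n$ (and $G = A_n$ with $p > 2$) I would write $C_{S_n}(a) \cong \prod_i C_{c_i} \wr S_{m_i}$ as in Lemma~\ref{sym-cent}. Because $a$ is $p$-regular each $c_i$ is prime to $p$, so a Sylow $p$-subgroup of the factor $C_{c_i} \wr S_{m_i}$ is a Sylow $p$-subgroup of $S_{m_i}$. For every $c_i > 1$ we have $m_i < p$, whence $m_i!$ is prime to $p$ and this factor contributes nothing; the only nontrivial contribution comes from the fixed-point factor $S_{m_0} = S_F$. Hence a Sylow $p$-subgroup $S$ of $C_{S_n}(a)$ lies in $S_F$ and has order equal to the $p$-part of $|S_F|$, so $S$ is a Sylow $p$-subgroup of $S_F$, settling $G = S_n$. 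For $G = A_n$ with $p > 2$ every $p$-element of $S_n$ is even, so a Sylow $p$-subgroup of $C_{A_n}(a) = C_{S_n}(a) \cap A_n$ coincides with $S$; since $[S_F : A_F] = 2$ is prime to $p$, this $S$ is also a Sylow $p$-subgroup of $A_F$.

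The main obstacle is the case $G = A_n$, $p = 2$, where a Sylow $2$-subgroup need not be even and the refined restriction must be used. Here I would set $S = \prod_i T_i$ with $T_i$ a Sylow $2$-subgroup of $S_{m_i}$, using that each $c_i$ is odd, so that $P = S \cap A_n$ is a Sylow $2$-subgroup of $C_{A_n}(a)$. If no length $c > 1$ has $m_c \geq 2$, then only the fixed-point factor contributes, $S$ is a Sylow $2$-subgroup of $S_F$, and Lemma~\ref{alt-2-syl} applied inside $S_F$ gives that $P = S \cap A_F$ is a Sylow $2$-subgroup of $A_F$. Otherwise exactly one length $c > 1$ occurs with multiplicity $2$ or $3$ and all other lengths occur at most once; then $S \cong T_c$ is generated by a transposition of two $c$-cycles, which is a product of the odd number $c$ of transpositions and hence odd, so $P = S \cap A_n$ is trivial. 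Since $|F| \leq 1$ in this subcase, the Sylow $2$-subgroup of $A_F$ is likewise trivial, so $P$ is again a Sylow $2$-subgroup of $A_F$. In every case $M = F$ exhibits the required defect group, which completes the argument.
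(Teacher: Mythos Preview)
Your argument is correct and follows essentially the same route as the paper's proof: invoke Lemma~\ref{def-syl} to get $a$, use Theorem~\ref{symalt-idem} to force $m_i < p$ for all nontrivial cycle lengths (with the refined restriction when $G = A_n$, $p = 2$), and read off the Sylow $p$-subgroup from the fixed-point factor of the centraliser. The only cosmetic differences are that the paper takes $M = \emptyset$ rather than $M = F$ in the exceptional $A_n$, $p=2$ subcase (both work since $|F| \le 1$), and that you overload the symbol $P$ when passing from the given defect group to $S \cap A_n$; since all Sylow $p$-subgroups of $C_G(a)$ are defect groups this is harmless.
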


\begin{proof}
By Lemma \ref{def-syl}, we may choose an element $a$ with nonzero
coefficient in $e$ such that the Sylow $p$-subgroups of $C_G(a)$ are
defect groups of $e$; write the cycle type of $a$ as
$1^mc_1^{m_1}\cdots c_r^{m_r}$. By Theorem \ref{symalt-idem}, we
then have $m_i < p$ for all $i$ (except possibly in the case $G =
A_n$ and $p = 2$). Write $C_{S_n}(a) \cong S_m \times \prod_{i=1}^r
C_{c_i} \wr S_{m_i}$; then the factor $\prod_{i=1}^r C_{c_i} \wr
S_{m_i}$ has order not divisible by $p$, so the Sylow $p$-subgroups
of $C_{S_n}(a)$ are simply the Sylow $p$-subgroups of $S_m$. It
follows that a Sylow $p$-subgroup of $S_m$ is a defect group of $e$.
Since $S_m$ embeds into $S_n$ as the subgroup consisting of
permutations of the fixed points of $a$, we choose $M$ to be the set
of fixed points of $a$. This obviously works for $G = S_n$, and it
also works for $G = A_n$ because the Sylow $p$-subgroups of
$C_{A_n}(a)$ are the Sylow $p$-subgroups of $A_n \cap S_M = A_M$.

In the exceptional case $G = A_n$ and $p = 2$, it is possible that
there is a $k$ such that $m_k \geq 2$. If this happens, we must have
$m_i < 2$ for all $i \neq k$, $m_k < 4$, and $m < 2$. Then
$C_{S_n}(a)$ has order divisible by 2 but not by 4, and the
nontrivial element of a Sylow 2-subgroup is a product of $c_k$
transpositions, which is odd because $c_k$ is odd. This implies that
$C_{A_n}(a) = A_n \cap C_{S_n}(a)$ contains no nontrivial elements
of order 2, so its Sylow 2-subgroup is trivial. Then we may choose
$M$ to be empty.
\end{proof}

This is the restriction we need on the possible defect groups of
blocks of symmetric groups. To determine the possible fusion
systems, we will use Alperin's fusion theorem. First, we will
determine what the centric subgroups of the defect group look like.

\begin{lem}\label{symalt-centric}
Let $P$ be a Sylow $p$-subgroup of $S_{pm}$ or $A_{pm}$ and let $Q$
be a centric subgroup of $P$. Then $Q$ has no fixed points in $\{1,
\ldots, pm\}$ and $C_{S_{pm}}(Q)$ is a $p$-group.
\end{lem}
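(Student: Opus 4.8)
The plan is to extract from centricity only the single consequence that $Q$ contains the whole center of $P$. Decompose $P$ as the direct product $\prod_\alpha R_\alpha$ of Lemma~\ref{sym-syl}, where $\alpha$ runs over the orbits $O_\alpha$ of $P$ on $\{1,\dots,pm\}$ and $R_\alpha$ is a transitive Sylow $p$-subgroup of $S_{O_\alpha}$; since $p \mid pm$ forces the coefficient $d_0$ of Lemma~\ref{sym-syl} to vanish, every $O_\alpha$ has size a power of $p$ and $P$ itself has no fixed points. The center of each transitive factor is $Z(R_\alpha) = \langle z_\alpha\rangle$ of order $p$, with $z_\alpha$ acting without fixed points on $O_\alpha$ and trivially elsewhere, so $z := \prod_\alpha z_\alpha \in Z(P)$ is fixed-point-free on all of $\{1,\dots,pm\}$. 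Centricity gives $Z(P) \le C_P(Q) = Z(Q) \le Q$, hence $z \in Q$; as $z$ is fixed-point-free, $Q$ has no fixed points, which is the first assertion.

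For the second assertion I would first reduce to the transitive case. Put $C = C_{S_{pm}}(Q)$. Each $z_\alpha$ lies in $Z(P) \le Q$, so $C$ centralizes $z_\alpha$ and therefore preserves $\mathrm{supp}(z_\alpha) = O_\alpha$; thus $C$ embeds into $\prod_\alpha C_{S_{O_\alpha}}(Q_\alpha)$, where $Q_\alpha$ is the permutation group induced by $Q$ on $O_\alpha$. Because $P = \prod_\alpha R_\alpha$ is a direct product we have $C_P(Q) = \prod_\alpha C_{R_\alpha}(Q_\alpha)$, and $C_P(Q) = Z(Q) \le Q$ then forces $C_{R_\alpha}(Q_\alpha) \le Q_\alpha$, i.e. each $Q_\alpha$ is centric in the transitive group $R_\alpha$. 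It therefore suffices to prove: if $R$ is a transitive Sylow $p$-subgroup of $S_O$ with $|O| = p^i$ and $Q$ is centric in $R$, then $C_{S_O}(Q)$ is a $p$-group.

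I would prove this by induction on $i$, the case $i = 1$ being immediate. The central element $z \in Z(R) \cap Q$ has cycle type $p^{p^{i-1}}$, so by Lemma~\ref{sym-cent} $C_{S_O}(Q) \le C_{S_O}(z) \cong C_p \wr S_{p^{i-1}}$. As $z \in Z(R)$, its cycles form an $R$-invariant partition of $O$ into $p^{i-1}$ blocks of size $p$; restricting to this block system yields a surjection $R \to \overline R$ onto a transitive Sylow $p$-subgroup of $S_{p^{i-1}}$ whose kernel $D$ is the base group $\cong C_p^{p^{i-1}}$, an elementary abelian $p$-group. Writing $\overline{(-)}$ for the induced images, the fact that $C$ centralizes $Q$ with $z \in Z(Q)$ gives $\overline C \le C_{S_{p^{i-1}}}(\overline Q)$, while $\ker(C \to \overline C)$ is a $p$-group. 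Hence, provided $\overline Q$ is centric in $\overline R$, the inductive hypothesis shows $C_{S_{p^{i-1}}}(\overline Q)$, and with it $\overline C$ and $C$, to be a $p$-group.

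The heart of the argument, and the step I expect to be hardest, is precisely the claim that $\overline Q$ is centric in $\overline R$. Since $D \cong C_p^{p^{i-1}}$ is the permutation $\overline Q$-module on the blocks and $R = D \rtimes \overline R$, proving $C_{\overline R}(\overline Q) \le \overline Q$ amounts to a lifting problem: given $g \in R$ with $[g,Q] \le D$, one must correct $g$ by an element of $D$ so that it centralizes $Q$ exactly, i.e. split the associated $1$-cocycle $q \mapsto [g,q] \in D$. This will not follow from soft cohomological vanishing and must be settled using the explicit wreath structure of $R$ together with $z \in D \cap Z(Q)$. A final, separate obstacle is the case $G = A_{pm}$ with $p = 2$: here $P = W \cap A_{pm}$ has index $2$ in a Sylow $2$-subgroup $W$ of $S_{pm}$, the element $z$ above may be odd, and one must instead exhibit a fixed-point-free even element of $Z(P)$ by way of Lemma~\ref{alt-2-syl}; for odd $p$ the alternating case coincides with the symmetric one, every $p$-element already lying in $A_{pm}$.
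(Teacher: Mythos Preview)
Your opening is correct: extracting a fixed-point-free element $z \in Z(P) \le Q$ gives the first assertion, and the reduction to the transitive case via the direct-product decomposition of $P$ goes through as you describe. The problem is the inductive step. The claim on which the whole induction rests --- that if $Q$ is centric in $R$ then $\overline Q$ is centric in $\overline R$ --- is false, not merely hard. Take $Q = D$, the full base group of $R = D \rtimes \overline R$: since $D$ is abelian and no element with nontrivial image in $\overline R$ centralizes every coordinate generator of $D$, one has $C_R(D) = D$, so $D$ is centric in $R$; but $\overline D = 1$ is not centric in $\overline R$ once $i \ge 2$. In that case $C_{S_{p^{i-1}}}(\overline Q) = S_{p^{i-1}}$, which is not a $p$-group for $i \ge 3$, and the inductive hypothesis says nothing. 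Your cocycle-splitting reformulation cannot rescue this: when $Q = D$ there is no $d \in D$ with $dg \in C_R(D) = D$ for $\bar g \ne 1$, so the cocycle $q \mapsto [g,q]$ genuinely fails to split.

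The paper's proof is organized quite differently and avoids any passage to a quotient. It realizes $P$ inside a \emph{single} wreath product $C_p \wr S_m$ (not the iterated one) as $P = (C_p)^m \rtimes T$ with $T \in \Syl_p(S_m)$, and exploits the abelian base group $(C_p)^m$ directly: for every $\varphi(Q)$-orbit $K \subseteq \{1,\dots,m\}$ the element $a = \prod_{i \in K} a_i$ centralizes both $(C_p)^m$ and $\varphi(Q)$, hence centralizes $Q$, hence lies in $Q$ by centricity. Given a hypothetical nontrivial $p'$-element $x = bs$ of $C_{S_{pm}}(Q)$ with $s = \varphi(x)$, one chooses $k$ moved by $s$ and shows both that $s(k) \notin K$ (since $s$ centralizes $\varphi(Q)$ and has order prime to $p$) and that $s(K) = K$ (since $a \in Q$ forces $[a,s] = 1$), a contradiction. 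The case $G = A_{2m}$ is then handled by a parallel direct argument, splitting according to the parity of $m$. The moral difference is that the paper never loses centricity by passing to a quotient; instead it uses centricity once to produce an explicit supply of elements of $Q$ inside the abelian base group, and argues by contradiction from there.
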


\begin{proof}
We first consider the Sylow $p$-subgroups of $S_{pm}$. Since Sylow
$p$-subgroups are conjugate, it is sufficient to prove the claim for
a particular Sylow $p$-subgroup.

Let $z$ be an element of $S_{pm}$ consisting of $m$ $p$-cycles, and
let $C = C_{S_{pm}}(z)$. By Lemma \ref{sym-cent}, $C$ is then
isomorphic to $C_p \wr S_m \cong (C_p)^m \rtimes S_m$. We let $a_1,
\ldots, a_m$ be the generators of the $m$ copies of $C_p$, so that
$z = \prod_{i=1}^m a_i$, and we have the natural homomorphism
$\varphi: (C_p)^m \rtimes S_m \to S_m$ whose kernel is $(C_p)^m$.

The order of $C$ is $p^mm! = \prod_{i=1}^m pi$, which is equal to
the product of those factors in $(pm)!$ that are divisible by $p$.
Then $|C|$ is divisible by the same power of $p$ as $|S_{pm}|$, so a
Sylow $p$-subgroup of $C$ is also a Sylow $p$-subgroup of $S_{pm}$.
We pick one such group $P = (C_p)^m \rtimes T$, where $T$ is a Sylow
$p$-subgroup of $S_m$. Clearly, $z$ is an element of $Z(P)$.

Let $Q$ be a centric subgroup of $P$. Then $Q$ contains $Z(P)$, so
$z$ is an element of $Q$. Since $z$ has no fixed points, neither
does $Q$.

Furthermore, $C_{S_{pm}}(Q)$ is contained in $C$. Suppose that
$C_{S_{pm}}(Q)$ is not a $p$-group; we let $x$ be a nontrivial
element of $C_{S_{pm}}(Q)$ of order not divisible by $p$. Since $x
\in C$, we may write $x = bs$ with $b \in (C_p)^m$ and $s \in S_m$.
Then $s$ is a nontrivial element of $S_m$ with order not divisible
by $p$, since it is the image of $a$ under $\varphi$. Pick a $k \in
\{1, \ldots, m\}$ that is not a fixed point of $s$, and let $K$ be
the orbit of $k$ under $\varphi(Q)$.

If $s(k)$ is an element of $K$, there is a $t \in \varphi(Q)$ such
that $t(k) = s(k)$. But since $a$ centralizes $Q$, $s = \varphi(x)$
must centralize $\varphi(Q)$, so $s$ and $t$ commute. By induction
we then get $t^n(k) = t(t^{n-1}(k)) = t(s^{n-1}(k)) = s^{n-1}(t(k))
= s^{n-1}(s(k)) = s^n(k)$ for all $n \in \N$. That is, the cycles in
$s$ and $t$ containing $k$ are identical. But this is impossible
since $s$ has order not divisible by $p$ and does not fix $k$, and
$t$ is an element of a $p$-group. So $s(k) \not\in K$.

Now let $a = \prod_{i \in K} a_i$. By the definition of $K$, $a$
commutes with every element of $\varphi(Q)$; it also commutes with
every element of $(C_p)^m$, since this group is abelian. Since $Q$
is contained in $(C_p)^m \rtimes \varphi(Q)$, $a$ then centralizes
$Q$. As $a \in P$ and $Q$ is centric in $P$, it follows that $a \in
Q$. This implies that $a$ commutes with $x = bs$; since it also
commutes with $b$, it commutes with $s$. But this implies $s(K) =
K$, which is a contradiction since $s(k) \not\in K$.

This covers $S_{pm}$. When $p > 2$, $S_{pm}$ and $A_{pm}$ have the
same Sylow $p$-subgroups, so the only case that remains is the Sylow
2-subgroups of $A_{2m}$.

Consider first the case that $m$ is even, so we may write $2m = 4l$
for some $l$. Let $z$ be the product of $2l$ disjoint
transpositions; this is an even element, so it lies in $A_{4l}$. Set
$C' = C_{S_{4l}}(z)$; then $C'$ is isomorphic to $C_2 \wr S_{2l}
\cong (C_2)^{2l} \rtimes S_{2l}$, and if $T$ is a Sylow 2-subgroup
of $S_{2l}$, then $P' = (C_2)^{2l} \rtimes T$ is a Sylow 2-subgroup
of both $C'$ and $S_{4l}$. Set $C = C' \cap A_{4l}$ and $P = P' \cap
A_{4l}$; then $C = C_{A_{4l}}(z)$ and $P$ is a Sylow 2-subgroup of
$A_{4l}$ such that $z \in Z(P)$. We have the natural homomorphism
$\varphi: (C_2)^{2l} \rtimes S_{2l} \to S_{2l}$.

Let $Q$ be a centric subgroup of $P$; then $Q$ contains $z$, so
$C_{S_{4l}}(Q)$ is contained in $C'$. Suppose that $C_{S_{4l}}(Q)$
is not a 2-group, and let $x$ be a nontrivial element of odd order
of $C_{S_{4l}}(Q)$. As in the case of $S_{pm}$, we write $x = bs$
with $b \in (C_2)^{2l}$ and $s \in S_{2l}$, and we pick a $k \in
\{1, \ldots, 2l\}$ that is not a fixed point of $s$. Assume for the
moment that we can pick $k$ such that it is not a fixed point of
$\varphi(Q)$. We then let $K$ be the orbit of $k$ under
$\varphi(Q)$, and let $a = \prod_{i \in K} a_i$ where $a_1, \ldots,
a_{2l}$ are the generators of $(C_2)^{2l}$. The size of $K$ is a
power of 2 different from 1, so it is even. Then $a$ is an element
of $A_{2l}$, and by the same arguments as in the case of $S_{pm}$,
we conclude both that $s(K) = K$ and $s(k) \not\in K$, which is a
contradiction.

There remains the possibility that every element in $\{1, \ldots,
2l\}$ that is not a fixed point of $s$ is a fixed point of
$\varphi(Q)$. We then let $K$ be the set of points that are not
fixed points of $s$, and we write $(C_2)^K$ for the subgroup of
$(C_2)^{2l}$ generated by the elements $a_i$ as $i$ runs over $K$.
Then $R = (C_2)^K \cap A_{2l}$ is a nontrivial subgroup of $P$; by
the definition of $K$, any element of $\varphi(Q)$ centralizes this
group. Then every element of $R$ centralizes both $(C_2)^{4l}$ and
$\varphi(Q)$, so they all centralize $Q$. As $Q$ is centric in $P$,
we get $R \subseteq Q$, so $x$ centralizes $R$. Since $b \in
(C_2)^{2l}$, $b$ also centralizes $R$, and hence, so does $s$. But
if we take any $k \in K$, $k$ is a part of a cycle in $s$ of length
at least 3, so $a_ka_{s(k)}$ is an element of $R$ that does not
commute with $s$. This is a contradiction.

We now consider the case that $m$ is odd, so we may write $2m = 4l +
2$. Let $z$ be a product of $2l$ transpositions, and set $C' =
C_{S_{4l+2}}(z)$; then $C'$ is isomorphic to $(C_2 \wr S_{2l})
\times C_2$. We let $w$ be the nontrivial element of the $C_2$
factor, so that $w$ is the transposition interchanging the two fixed
points of $z$. Let $P'$ be a Sylow 2-subgroup of $C'$; comparing
orders, we find that $P'$ is also a Sylow 2-subgroup of $S_{4l+2}$.
Set $C = C' \cap A_{4l+2}$ and $P = P' \cap A_{4l+2}$; then $P$ is a
Sylow 2-subgroup of both $C$ and $A_{4l+2}$. Since $z$ is a central
element of $C$, any centric subgroup of $P$ contains $z$, so
$C_{A_{4l+2}}(Q)$ is contained in $C$. Since $C_{A_{4l+2}}(Q)$ has
index at most 2 in $C_{S_{4l+2}}(Q)$, it is sufficient to prove that
$C_{A_{4l+2}}(Q)$ is a 2-group.

Now consider the projection homomorphism $\pi: C' \to C_2 \wr
S_{2l}$ with kernel $C_2 = \<w\>$. For any element $x \in C'$,
exactly one of $x$ and $xw$ is even, since $w$ is odd. This implies
that when we restrict $\pi$ to $C$, we obtain an isomorphism $C
\cong C_2 \wr S_{2l}$. Then $\pi(P)$ is a Sylow 2-subgroup of $C_2
\wr S_{2l}$, and it is enough to prove that the centralizer in $C_2
\wr S_{2l}$ of any centric subgroup of $\pi(P)$ is a 2-group. But
this was shown as part of the case $G = S_{2l}$ and $p = 2$.
\end{proof}

We can now determine the number of Brauer pairs at a centric
subgroup of the defect group. This will allow us to determine the
automorphism groups of these pairs, which is sufficient to determine
the block fusion system.

\begin{lem}\label{sym-br}
Let $e$ be a block idempotent of $S_n$, let $P$ be a defect group of
$e$, and let $Q$ be a centric subgroup of $P$. Then there exists a
unique Brauer pair at $Q$ associated to $e$.
\end{lem}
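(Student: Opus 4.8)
The plan is to put $P$ into a standard position, compute $C_{S_n}(Q)$ explicitly, and thereby label the blocks of $C_{S_n}(Q)$ by blocks of a symmetric group on the points that $Q$ does not move; the whole statement then becomes a statement about which of these labels can occur. By Lemma~\ref{symalt-idem-syl} and the conjugacy of defect groups, I may replace $e$ by a conjugate and assume that $P$ is a Sylow $p$-subgroup of $S_M$ for some $M \subseteq \{1,\dots,n\}$; shrinking $M$ to the support of $P$, I may assume $|M| = pm$ and that $P$ moves every point of $M$. Write $M^c = \{1,\dots,n\}\setminus M$. Applying Lemma~\ref{symalt-centric} (with $M$ identified with $\{1,\dots,pm\}$), the centric subgroup $Q$ has no fixed point in $M$, so its support is exactly $M$, and $D := C_{S_M}(Q)$ is a $p$-group. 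Since anything centralizing $Q$ must stabilize the set $M$, a short support argument gives $C_{S_n}(Q) = D \times S_{M^c}$. As $k$ is algebraically closed and $D$ is a $p$-group, $kD$ is local with unique block $1$, so the block idempotents of $C_{S_n}(Q)$ are exactly the elements $1 \otimes \bar f$ with $\bar f$ a block idempotent of $S_{M^c}$. I will use this labelling of the blocks of $C_{S_n}(Q)$ by blocks of $S_{M^c}$ throughout.

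Next I would recast the problem. Using the Proposition on inclusions of Brauer pairs together with the definition of $\unlhd$, a Brauer pair $(Q,f)$ is associated to $e$ precisely when $\Br_Q(e)f = f$, that is, when $f$ is a block summand of the central idempotent $\Br_Q(e)$ of $C_{S_n}(Q)$. Existence of at least one such $f$ is immediate: since $Q \le P$, the Proposition yields a unique block $f$ with $(Q,f)\le(P,e_P)$ for a maximal pair $(P,e_P)$, and descending the chain to the trivial subgroup shows $(Q,f)$ is associated to $e$; here $e_P = 1 \otimes \bar e$ for a definite block $\bar e$ of $S_{M^c}$. Thus the real content of the lemma is that $\Br_Q(e)$ has exactly one block summand.

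The uniqueness is the main obstacle, and the mechanism I expect to use is that the Brauer homomorphism is label-preserving on centralizers of this product shape. Whenever $Q \le Q' \unlhd Q''$ with all three supported on $M$, both $C_{S_n}(Q')$ and $C_{S_n}(Q'')$ have the form $(p\text{-group}) \times S_{M^c}$, and $\Br_{Q''/Q'}$ sends $1 \otimes \bar f$ to $1 \otimes \bar f$; hence the $S_{M^c}$-label is constant along any inclusion chain of such pairs. Now take an arbitrary Brauer pair $(Q,f)$ associated to $e$ and enlarge it to a maximal pair $(P',e')$ associated to $e$ (every Brauer pair lies under a maximal one with the same block, by finiteness of the poset). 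Since the support of $Q$ is $M$ and every defect group has support of size $pm$, $P'$ must also be supported on $M$, i.e. a Sylow $p$-subgroup of $S_M$. All maximal pairs associated to $e$ are conjugate, and two supported on $M$ are conjugate by an element $g$ with $g^{-1}(M)=M$, i.e. $g \in \mathrm{Stab}_{S_n}(M) = S_M \times S_{M^c}$. Such a conjugation fixes the $S_{M^c}$-label: the $S_M$-part only permutes the $p$-group factor, while the $S_{M^c}$-part acts as an inner automorphism of $S_{M^c}$ and so fixes its central idempotents. Therefore every maximal pair associated to $e$ and supported on $M$ carries the single label $\bar e$, and by label-constancy along the chain from $Q$ to $P'$ we get $f = 1 \otimes \bar e$, pinning down $f$ uniquely.

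The delicate points to get right will be the support bookkeeping — that centralizing or normalizing $Q$ forces preservation of $M$, so that every relevant $P'$ is genuinely a Sylow $p$-subgroup of $S_M$ and every intermediate centralizer splits as $(p\text{-group}) \times S_{M^c}$ — and the verification that $\Br_{Q''/Q'}$ acts as the identity on the $S_{M^c}$-labels. Both are elementary, but they must be carried out carefully so that no label is silently lost or created; once they are in place, existence and uniqueness combine to give exactly one Brauer pair at $Q$ associated to $e$.
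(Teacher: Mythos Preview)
Your argument is correct, but it takes a different route from the paper's.  The paper computes $C_{S_n}(Q)\cong T\times S_N$ just as you do, but then finishes in one stroke: by Lemma~\ref{sym-konj} every conjugacy-class sum in $S_N$ is the restriction of a conjugacy-class sum in $S_n$, so the restricted Brauer map $\Br_Q:Z(kS_n)\to Z(kC_{S_n}(Q))=Z(kS_N)$ is \emph{surjective}, and then general idempotent lifting forces $\Br_Q(e)$ to be primitive.  No poset bookkeeping, no maximal pairs, no label-tracking.

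Your approach instead never analyses $\Br_Q(e)$ directly; it labels each Brauer pair at a group supported on $M$ by a block of $S_{M^c}$, checks that this label is constant along $\unlhd$ (because $\Br_{Q''/Q'}$ is the identity on the $S_{M^c}$-factor) and invariant under conjugation between maximal pairs supported on $M$ (because such conjugation lies in $S_M\times S_{M^c}$ and the $S_{M^c}$-part fixes central idempotents), and then pins the label down by passing through a maximal pair.  The two ingredients you flag as delicate---that every $P'$ containing $Q$ must have support exactly $M$, and that every intermediate $Q'$ in a chain $Q\le Q'\le P'$ again has centralizer of the form $(\text{$p$-group})\times S_{M^c}$---both go through: the first because $P'$ is conjugate to $P$ and hence has support of size $|M|$, while it contains the full support $M$ of $Q$; the second because any overgroup of a centric subgroup is centric, so Lemma~\ref{symalt-centric} applies.

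What each buys: the paper's proof is shorter and needs only one new input (Lemma~\ref{sym-konj}).  Your proof is more structural and, interestingly, is much closer in spirit to how the paper itself handles the alternating-group case in Lemma~\ref{alt-br}, where surjectivity of $\Br_Q$ on centres genuinely fails and one is forced to track how $S_N/A_N$ acts on the labels.  So your method anticipates the mechanism needed later, at the cost of a longer argument here.
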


\begin{proof}
Let $N \subseteq \{1, \ldots, n\}$ be the set of fixed points of $P$
and let $M$ be its complement. Then $P$ is a Sylow $p$-subgroup of
$S_M$ and $|M|$ is divisible by $p$, so by Lemma
\ref{symalt-centric}, $C_{S_n}(Q)$ is isomorphic to $T \times S_N$
where $T$ is a $p$-group. Then every $p$-regular element of
$C_{S_n}(Q)$ is contained in $S_N$, so we may identify the central
idempotents of $kC_{S_n}(Q)$ with the central idempotents of $kS_N$.

Since the map $\Br_Q: (kS_n)^Q \to kC_{S_n}(Q)$ is surjective, it
restricts to a map $\Br_Q: Z(kS_n) \to Z(kC_{S_n}(Q))$. By Lemma
\ref{sym-konj}, any element of $Z(kS_N)$ lies in the image of this
restricted map. This includes the primitive idempotents of
$Z(kC_{S_n}(Q))$, so these can all be lifted to a primitive
idempotent of $kS_n$. Conversely, $\Br_Q$ then maps any primitive
idempotent of $Z(kS_n)$ to either zero or a primitive idempotent. In
particular, $\Br_Q(e)$ is a primitive idempotent of
$Z(kC_{S_n}(Q))$, so there is a unique Brauer pair at $Q$ associated
to $e$.
\end{proof}

\begin{thm}\label{sym-fusion}
Let $e$ be a block idempotent of $S_n$, let $P$ be a defect group of
$e$, and let $M$ be a subset of $\{1, \ldots, n\}$ such that $P$ is
a Sylow $p$-subgroup of $S_M$. Then the block fusion system on $P$
is equal to $\F_P(S_M)$.
\end{thm}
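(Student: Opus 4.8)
The plan is to invoke Alperin's fusion theorem (Theorem \ref{alperin}) to reduce the equality of the two saturated fusion systems on $P$ — the block fusion system, which I call $\F_1$, and $\F_2 = \F_P(S_M)$ — to the equality $\Aut_{\F_1}(Q) = \Aut_{\F_2}(Q)$ for every subgroup $Q$ that is centric in $P$. Being centric in $P$ is a purely group-theoretic property of the pair $(Q,P)$, independent of the ambient fusion system, so the same family of subgroups $Q$ serves both systems, and Theorem \ref{alperin} will yield $\F_1 = \F_2$ once these automorphism groups agree. On the right-hand side, $\Aut_{\F_2}(Q) = \Aut_{S_M}(Q) = N_{S_M}(Q)/C_{S_M}(Q)$ by the definition of the group fusion system.

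The first main step is to identify $\Aut_{\F_1}(Q)$ with $\Aut_{S_n}(Q)$. Fix a maximal Brauer pair $(P,e_P)$ associated to $e$, and for a centric $Q$ let $(Q,e_Q)$ be the unique subpair at $Q$ below it. By definition $\Aut_{\F_1}(Q)$ consists of the conjugation maps $c_g$ with $g \in N_{S_n}(Q)$ and $e_Q^g = e_Q$. The key observation is that this second condition is automatic: since $e$ is a central idempotent of $kS_n$ we have $e^g = e$ for every $g \in S_n$, so for $g \in N_{S_n}(Q)$ the conjugate $(Q,e_Q^g)$ is again a Brauer pair at $Q$ associated to $e$. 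By the uniqueness in Lemma \ref{sym-br} it must equal $(Q,e_Q)$, whence $e_Q^g = e_Q$. Therefore $\Aut_{\F_1}(Q) = N_{S_n}(Q)/C_{S_n}(Q) = \Aut_{S_n}(Q)$.

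It remains to show $\Aut_{S_n}(Q) = \Aut_{S_M}(Q)$ for centric $Q$. Let $M_0 \subseteq M$ be the set of points actually moved by $P$. By Lemma \ref{sym-syl} every nontrivial orbit of $P$ has size a power of $p$, so $|M_0|$ is divisible by $p$, and comparing $p$-parts of $|M_0|!$ and $|M|!$ shows $P$ is a Sylow $p$-subgroup of $S_{M_0}$. Lemma \ref{symalt-centric} then applies and gives that $Q$ has no fixed point in $M_0$; hence the fixed-point set of $Q$ in $\{1,\ldots,n\}$ is exactly the complement $F = \{1,\ldots,n\}\setminus M_0$. Any element of $S_n$ normalizing or centralizing $Q$ must preserve both $M_0$ and $F$, so $N_{S_n}(Q)$ and $C_{S_n}(Q)$ each split as the corresponding subgroup of $S_{M_0}$ times the full symmetric group $S_F$ on the fixed points; the $S_F$-factors act trivially on $Q$ and cancel in the quotient, giving $\Aut_{S_n}(Q) = \Aut_{S_{M_0}}(Q)$. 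The identical splitting inside $S_M = S_{M_0}\times S_{M\setminus M_0}$ yields $\Aut_{S_M}(Q) = \Aut_{S_{M_0}}(Q)$, so the two groups coincide as subgroups of $\Aut(Q)$.

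Combining the steps, $\Aut_{\F_1}(Q) = \Aut_{S_n}(Q) = \Aut_{S_M}(Q) = \Aut_{\F_2}(Q)$ for every centric $Q$, and Theorem \ref{alperin} forces $\F_1 = \F_P(S_M)$. I expect the main obstacle to be the bookkeeping in the last step: one must verify carefully that normalizers and centralizers in $S_n$ and in $S_M$ differ only by the symmetric group on the common fixed points of $Q$. This is precisely where Lemma \ref{symalt-centric} is essential, since without the "no fixed points on the support" conclusion the extra permutations of would-be fixed points inside $M_0$ could enlarge one automorphism group relative to the other and break the identification.
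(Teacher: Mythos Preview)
Your proof is correct and follows essentially the same route as the paper: Alperin's theorem reduces the question to centric $Q$, Lemma~\ref{sym-br} gives $\Aut_{\F}(Q)=\Aut_{S_n}(Q)$, and one finishes by identifying this with $\Aut_{S_M}(Q)$. The only minor difference is in this last identification: the paper works directly with the fixed-point set $N$ of $Q$, using only that $M\cup N=\{1,\ldots,n\}$, and modifies a representative $g\in N_{S_n}(Q)$ by the element $h\in S_N$ with $h|_N=g|_N$ to obtain $gh^{-1}\in S_M$, rather than passing through the auxiliary support $M_0$ and invoking Lemma~\ref{symalt-centric} a second time.
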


\begin{proof}
Let $\F$ be the block fusion system on $P$. By Alperin's theorem, it
is sufficient to prove that $\Aut_\F(Q) = \Aut_{S_M}(Q)$ for each
centric subgroup $Q$ of $P$. For each of these groups, there is a
unique Brauer pair $(Q, e_Q)$ associated to $e$, so all elements of
$S_n$ that normalize $Q$ also normalize $(Q, e_Q)$. Then we have
$\Aut_\F(Q) = \Aut_{S_n}(Q)$. Since $S_M \subseteq S_n$, we have
$\Aut_{S_M}(Q) \subseteq \Aut_{S_n}(Q)$, so we need only prove the
reverse inclusion. Let $\varphi \in \Aut_{S_n}(Q)$ be an
automorphism represented by the element $g \in S_n$. Let $N$ be the
set of fixed points of $Q$; then $M \cup N = \{1, \ldots, n\}$,
although the two sets need not be disjoint. Since $g$ normalizes
$Q$, $g$ maps $N$ to itself, so we may consider the element $h \in
S_n$ defined by $h(x) = g(x)$ for $x \in N$ and $h(x) = x$
otherwise. Since $h$ only permutes the fixed points of $Q$, it must
centralize $Q$. That is, $h$ represents the identity automorphism of
$Q$. Then $gh^{-1}$ represents $\varphi$, and we have $gh^{-1}(x) =
x$ for $x \in N$. This implies that $gh^{-1}$ is an element of
$S_M$, so it follows that $\Aut_{S_n}(Q) \subseteq \Aut_{S_M}(Q)$.
\end{proof}

\begin{lem}\label{alt-br}
Let $e$ be a block idempotent of $A_n$, let $P$ be a defect group of
$e$, and let $Q$ be a centric subgroup of $P$. Then there exist
either one or two Brauer pairs af $Q$ associated to $e$, this number
being the same for all $Q$. If there are two, then conjugation by $g
\in N_{A_n}(Q)$ interchanges the two pairs if and only if the action
of $g$ on the fixed points of $P$ is an odd permutation.
\end{lem}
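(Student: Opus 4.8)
The plan is to identify the Brauer pairs at $Q$ associated to $e$ with the primitive idempotent constituents of $\Br_Q(e)$ in $Z(kC_{A_n}(Q))$, and to count them by comparison with the symmetric group. Write $C=C_{A_n}(Q)$ and $\hat C=C_{S_n}(Q)$, let $N$ be the set of fixed points of $Q$, and let $M$ be its complement. Since $Q\le P$ and $P$ is a Sylow $p$-subgroup of $A_M$, Lemma \ref{symalt-centric} shows that $Q$ has no fixed points in $M$; hence $N$ is also the fixed-point set of $P$, so it does not depend on $Q$, and $T:=C_{S_M}(Q)$ is a $p$-group. Thus $\hat C=T\times S_N$ and $C=(T\times S_N)\cap A_n$ has index at most $2$ in $\hat C$. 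As $T$ is a $p$-group, the blocks of $k\hat C$ coincide with the blocks of $kS_N$, all supported on $\{1\}\times S_N$, and similarly the blocks of $kC$ are governed by $A_N$ or $S_N$ according to whether $T$ contains odd permutations.

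To bound the count by two, I would use the result for symmetric groups. Since the blocks of $S_n$ sum to $1$, some block $b$ of $S_n$ satisfies $be\ne 0$, and then $be=e$ because $be$ is a central idempotent of $kA_n$ dominated by the primitive idempotent $e$. By Lemma \ref{sym-br} there is a unique Brauer pair at $Q$ for $b$, i.e.\ $\hat f:=\Br_Q(b)$ is a primitive central idempotent of $k\hat C$. Because $e\in Z(kA_n)\subseteq (kS_n)^Q$ and $\Br_Q(be)=\Br_Q(e)$, we get $\hat f\,\Br_Q(e)=\Br_Q(e)$, so $\Br_Q(e)$ lies in the single block $\hat f$ of $k\hat C$. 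Its constituents are therefore among the blocks of $kC$ covered by $\hat f$, and since $[\hat C:C]\le 2$ there are at most two of them; there is at least one because $Q\le P$ forces $\Br_Q(e)\ne 0$. Hence the number of Brauer pairs at $Q$ is one or two.

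To see that this number is the same for all $Q$ and to pin it down, I would identify $\hat f$ with the block of $S_N$ containing the $p$-regular element $a$ of Lemma \ref{symalt-idem-syl}: since $a$ is supported on $N$ it lies in $C$, and it has nonzero coefficient in $\Br_Q(e)$, so $\hat f$ corresponds to the block $f_N$ of $kS_N$ containing $a$. By the computation in Lemma \ref{symalt-idem-syl}, $a$ has centralizer of order prime to $p$ in $S_N$, so $f_N$ is a block of defect zero. The number of constituents of $\Br_Q(e)$ then equals the number of blocks of $kA_N$ into which $f_N$ splits, which is one or two. Concretely, using Lemma \ref{alt-konj} and Lemma \ref{sym-konj} to compare $A_n$-classes with $S_N$-classes (the extra odd permutations available on $M$ forcing $\C\cap A_N$ to be a full $S_N$-class), the two constituents occur exactly when the $S_N$-class of $a$ splits into two $A_N$-classes; this criterion depends only on $N$ and $a$, hence only on $e$, and in particular is independent of $Q$ because $N$ is.

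Finally, for the conjugation statement, note that $g\in N_{A_n}(Q)$ fixes $N$ setwise, since it preserves the fixed-point set of $Q$, and normalizes $C$, so it permutes the constituents $f_1,f_2$ of $\Br_Q(e)$ by $f_i\mapsto f_i^g$; as $e$ is $A_n$-invariant, $\Br_Q(e)$ is $g$-invariant and this action is well defined. When there are two pairs, $f_1$ and $f_2$ correspond to the two $A_N$-blocks obtained by splitting $f_N$, which are interchanged precisely by the odd elements of $S_N$; since the action of $g$ on $\{f_1,f_2\}$ factors through the restriction $g|_N\in S_N$, it follows that $g$ interchanges the two pairs if and only if $g|_N$ is an odd permutation, that is, $g$ acts oddly on the fixed points of $P$. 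The main obstacle I expect lies in the bookkeeping of the third and fourth steps: one must check that the identification of the blocks of $kC$ with blocks of $kA_N$ (respectively $kS_N$) is compatible both with the count coming from the $\hat C$-comparison and with the parity action of $N_{A_n}(Q)$ on the fixed points, and one must handle uniformly the cases $p>2$ and $p=2$, the latter requiring Lemma \ref{alt-2-syl} to control the orbits and the parities of the relevant $2$-elements.
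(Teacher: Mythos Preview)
Your approach via the covering block $b$ of $S_n$ is genuinely different from the paper's, but it contains a real gap at the very first step. You assert that $be=e$ ``because $be$ is a central idempotent of $kA_n$ dominated by the primitive idempotent $e$'', but $be$ is a priori only an element of $kS_n$; for odd $p$ the block idempotent $b$ is typically supported on odd permutations as well as even ones, so $be\notin kA_n$ and your primitivity argument does not apply. In fact, when $e$ is $S_n$-stable one has $\hat e=e$ and hence $be=b\hat e=b$, so $be=e$ would force $b=e$; but by Clifford theory for blocks (with $p\nmid[S_n:A_n]$) an $S_n$-stable block of $A_n$ is in general covered by two distinct blocks of $S_n$, so $b\neq e$ and your identity fails. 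Without $be=e$ you cannot conclude $\hat f\cdot\Br_Q(e)=\Br_Q(e)$, and the bound ``at most two'' does not follow from your argument. (For $p=2$ the claim $be=e$ is actually fine, since every $2$-regular element of $S_n$ is even and hence $b\in kA_n$; the problem is with odd $p$.)

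There is a second gap even if one grants a single $\hat f$ dominating $\Br_Q(e)$: you assert that the number of constituents of $\Br_Q(e)$ \emph{equals} the number of $kA_N$-blocks into which $f_N$ splits, but your argument only gives an upper bound. If $\hat f$ covers two $kC$-blocks $f_1,f_2$, nothing you have written excludes $\Br_Q(e)=f_1$; the detour through the element $a$ and ``defect zero'' does not close this, and the inference from ``$C_{S_N}(a)$ has $p'$-order'' to ``$f_N$ has defect zero'' is itself unjustified.

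The paper avoids the covering-block machinery altogether. It observes that the $p$-regular elements of $C_{A_n}(Q)$ lie in $A_N$, so block idempotents of $kC_{A_n}(Q)$ are identified with block idempotents of $kA_N$; under this identification the restricted Brauer map $\Br_Q\colon Z(kA_n)\to Z(kA_N)$ sends a class sum $\Sigma\C$ to $\Sigma(\C\cap A_N)$, a formula that is visibly independent of $Q$. Lemma~\ref{alt-konj} then characterizes the image as exactly the $\sigma$-fixed part of $Z(kA_N)$ (where $\sigma$ is the nontrivial element of $S_N/A_N$), and idempotent lifting yields that $\Br_Q(e)$ is either a $\sigma$-fixed block $e_Q$ or a pair $e_Q+e_Q^{\sigma}$. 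This gives the one-or-two dichotomy, its independence of $Q$, and the parity criterion for the swap, all at once and without any reference to $S_n$-blocks.
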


\begin{proof}
Let $N$ be the set of fixed points of $P$. By Lemma
\ref{symalt-centric}, $C_{S_n}(Q)$ is the direct product of a
$p$-group with $S_N$, so every $p$-regular element of $C_{S_n}(Q)$
is contained in $S_N$. Then all $p$-regular elements of $C_{A_n}(Q)$
are contained in $A_n \cap S_N = A_N$, so we may identify the
central idempotents of $kC_{A_n}(Q)$ with the central idempotents of
$kA_N$. Since $N$ does not depend on the choice of $Q$, this shows
that there is the same number of Brauer pairs at $Q$ associated to
$e$ for all possible $Q$. We also note that since the map $\Br_Q:
(kA_n)^Q \to kC_{A_n}(Q)$ is surjective, it restricts to a map
$\Br_Q: Z(kA_n) \to Z(kC_{A_n}(Q))$.

Since $A_N$ is normal in $S_N$, $S_N$ acts on $kA_N$ by conjugation,
and $A_N$ obviously acts trivially on $Z(kA_N)$ under this action.
Hence $S_N/A_N$ acts on $Z(kA_N)$; let $\sigma$ be the nontrivial
element of $S_N/A_N$. Let $x$ be any element of $Z(kA_N)$. Then $x$
is also an element of $Z(kC_{A_n}(Q))$, and by Lemma \ref{alt-konj},
$x$ lies in $\Br_Q(Z(kA_n))$ if and only if $x^\sigma = x$.

Now let $(Q,e_Q)$ be a Brauer pair at $Q$ associated to $e$; then
$e_Q^\sigma$ is again a central idempotent of $kA_N$. Suppose first
that $e_Q^\sigma = e_Q$. Then $e_Q$ lies in $\Br_Q(Z(kA_n))$, and
therefore lifts to a primitive central idempotent of $kA_n$. This
primitive central idempotent must be $e$, since $(Q,e_Q)$ is
associated to $e$. That is, we have $\Br_Q(e) = e_Q$, so there is a
unique Brauer pair at $Q$ associated to $e$.

Now suppose that $e_Q^\sigma \neq e_Q$. Since $\sigma^2$ is trivial,
$\sigma$ then interchanges $e_Q$ and $e_Q^\sigma$. As $e_Q$ and
$e_Q^\sigma$ are distinct primitive central idempotents of $kA_N$,
$e_Q + e_Q^\sigma$ is a central idempotent of $kA_N$. Since $(e_Q +
e_Q^\sigma)^\sigma = e_Q^\sigma + e_Q$, $e_Q + e_Q^\sigma$ lies in
$\Br_Q(Z(kA_n))$. Additionally, it must be a primitive idempotent in
$\Br_Q(Z(kA_n))$, since $e_Q$ does not lie in $\Br_Q(Z(kA_n))$ and
both $e_Q$ and $e_Q^\sigma$ are primitive in $Z(kC_{A_n}(Q))$. Then
$e_Q + e_Q^\sigma$ lifts to a primitive idempotent of $Z(kA_n)$;
since $(Q,e_Q)$ is associated to $e$, this idempotent must be $e$.
That is, we have $\Br_Q(e) = e_Q + e_Q^\sigma$, so there are two
Brauer pairs at $Q$ associated to $e$, namely $(Q,e_Q)$ and
$(Q,e_Q^\sigma)$.

For any $h \in N_{A_n}(Q)$, conjugation by $h$ must either fix both
of these pairs or interchange them, depending on whether $e_Q^h$ is
equal to $e_Q$ or $e_Q^\sigma$. This can only depend on the action
of $h$ on $N$, since $e_Q$ lies in $kA_N$, and by the above, we have
$e_Q^h = e_Q^\sigma$ if and only if $h$ restricts to an odd element
of $S_N$.
\end{proof}

\begin{thm}\label{alt-fusion}
Let $e$ be a block idempotent of $A_n$, let $P$ be a defect group of
$e$, and let $M$ be the subset of $\{1, \ldots, n\}$ consisting of
those elements that are not fixed points of $P$. Then $P$ is a Sylow
$p$-subgroup of $A_M$, and the block fusion system on $P$ is equal
to either $\F_P(A_M)$ or $\F_P(S_M)$. If $p = 2$, then the block
fusion system on $P$ is equal to $\F_P(A_M)$.
\end{thm}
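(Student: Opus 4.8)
The plan is to follow the proof of Theorem \ref{sym-fusion}, adding a parity analysis to handle the index-two subgroup. First I would settle the defect-group claim: Lemma \ref{symalt-idem-syl} gives a subset whose alternating group has a Sylow $p$-subgroup that is a defect group of $e$, and since all defect groups of $e$ are conjugate and conjugation preserves the cardinality of the support, $P$ is itself a Sylow $p$-subgroup of $A_M$ where $M$ is its own support. By Alperin's theorem (Theorem \ref{alperin}) it then suffices to compute $\Aut_\F(Q)$ for every centric subgroup $Q$ of $P$ and compare it with $\Aut_{A_M}(Q)$ and $\Aut_{S_M}(Q)$.

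Let $N$ be the fixed-point set of $P$, which by Lemma \ref{symalt-centric} is the common fixed-point set of all centric $Q$. Every $g \in N_{A_n}(Q)$ preserves $M$ and $N$ and factors as $g = g|_N\, g|_M$ with $g|_N$ and $g|_M$ of equal parity; since $g|_N$ centralizes $Q$, the automorphism it induces is conjugation by $g|_M \in N_{S_M}(Q)$. I would now invoke Lemma \ref{alt-br}. When there is a single Brauer pair at $Q$, the argument of Theorem \ref{sym-fusion} gives $\Aut_\F(Q) = \Aut_{A_n}(Q)$, and the parity factorization identifies this with $\Aut_{S_M}(Q)$ if $|N| \geq 2$ (an odd element of $N_{S_M}(Q)$ may be multiplied by an odd permutation of $N$ to produce an even element of $N_{A_n}(Q)$ inducing the same automorphism) and with $\Aut_{A_M}(Q)$ if $|N| \leq 1$. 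When there are two Brauer pairs, Lemma \ref{alt-br} says $g$ fixes the chosen pair precisely when $g|_N$, equivalently $g|_M$, is even, so $\Aut_\F(Q) = \Aut_{A_M}(Q)$. As the number of pairs and the set $N$ do not depend on $Q$, the fusion system is uniformly $\F_P(A_M)$ or $\F_P(S_M)$, which is the first assertion.

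For $p = 2$ the only route to $\F_P(S_M)$ is a single Brauer pair with $|N| \geq 2$ at some centric $Q$ with $\Aut_{S_M}(Q) \neq \Aut_{A_M}(Q)$; this inequality forces $C_{S_M}(Q) \subseteq A_M$, since otherwise an odd centralizing permutation would collapse the two automorphism groups. I would derive a contradiction by showing that in this situation there are in fact two Brauer pairs. From $C_{S_n}(Q) \cong C_{S_M}(Q) \times S_N$ and $C_{S_M}(Q) \subseteq A_M$ one obtains $C_{A_n}(Q) \cong C_{S_M}(Q) \times A_N$, and similarly in $S_n$ for the block $\tilde e$ of $S_n$ with $e\tilde e = e$; the corresponding Brauer blocks then factor as the unique block of the $2$-group $C_{S_M}(Q)$ tensored with a block $b_N$ of $A_N$, respectively $\tilde b_N$ of $S_N$. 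The structural input I would use is that the defect group of $\tilde e_Q$ is contained in $C_{\tilde P}(Q)$, where $\tilde P$ is a defect group of $\tilde e$; this group is supported inside $M$, so the $S_N$-factor $\tilde b_N$ contributes trivial defect and is therefore a block of defect zero.

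It remains to note that a defect-zero $2$-block of $S_N$ is the block of a $2$-core partition, that every $2$-core is a staircase and hence self-conjugate, and that the irreducible character of a self-conjugate partition splits on restriction to $A_N$. Thus $\tilde b_N|_{A_N}$ is a sum of two $\sigma$-conjugate blocks, and $b_N = e_Q|_{A_N}$, which is a constituent of $\tilde b_N|_{A_N}$, satisfies $b_N^\sigma \neq b_N$; by Lemma \ref{alt-br} there are two Brauer pairs at $Q$, the desired contradiction. The crux — and the step I expect to be the main obstacle — is exactly this self-conjugacy of $2$-cores: it is what makes $p = 2$ special, since for odd $p$ the relevant $p$-core need not be self-conjugate and $\F_P(S_M)$ genuinely occurs. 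Carrying it out within the paper's elementary framework would presumably route through the splitting criterion of Lemma \ref{alt-konj}, applied to the distinct-odd-cycle element $a$ furnished by Lemma \ref{symalt-idem-syl}, which lies in $b_N$ and whose $A_N$-class splits.
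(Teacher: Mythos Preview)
Your proof of the dichotomy $\F \in \{\F_P(A_M), \F_P(S_M)\}$ is correct and matches the paper's: both proceed via Alperin's theorem, the parity factorization $g = g|_M \cdot g|_N$ for normalizers of centric $Q$, and the one-versus-two Brauer pair count of Lemma~\ref{alt-br}.

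For the case $p = 2$ your route diverges sharply from the paper's and carries real gaps. You import the block theory of $S_n$ (a covering block $\tilde e$, its Brauer pair $\tilde e_Q$, defect groups of $\tilde e_Q$, the Nakayama correspondence, and the combinatorial fact that $2$-cores are staircases), none of which is established in the paper. Several of the steps are not justified as stated: the assertion that a defect group of $\tilde e_Q$ is contained in $C_{\tilde P}(Q)$ requires a specific result on defect groups of blocks occurring in Brauer pairs that you do not cite; the claim that $\tilde P$ is supported in $M$ (needed to conclude that $\tilde b_N$ has defect zero) presupposes control over how defect groups behave under the covering $A_n \lhd S_n$ at $p=2$, which is exactly the delicate point; and the passage from ``$\tilde b_N$ restricts to two $\sigma$-conjugate blocks of $A_N$'' to ``$e_Q^\sigma \neq e_Q$'' needs an argument linking $e_Q$ to $\tilde e_Q$ via Clifford theory for blocks. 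Even granting all this, you are invoking substantial external machinery to rule out a configuration.

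The paper's argument for $p=2$ is far more economical and stays entirely inside the fusion-system framework already set up. It does not seek a contradiction; instead it shows directly that $\F_P(S_M) = \F_P(A_M)$ whenever the former arises. Take a Sylow $2$-subgroup $P'$ of $S_M$ with $P \leq P'$, so $|P':P| = 2$ and $P \lhd P'$. Then $\Aut_{P'}(P)$ is a $2$-subgroup of $\Aut_{S_M}(P) = \Aut_\F(P)$ containing $\Aut_P(P)$; saturation forces $\Aut_P(P)$ to be Sylow in $\Aut_\F(P)$, hence $\Aut_{P'}(P) = \Aut_P(P)$, so some odd $x \in P'$ centralizes $P$. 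This $x$ then centralizes every centric $Q$, and multiplying any odd $g \in N_{S_M}(Q)$ by $x$ shows $\Aut_{S_M}(Q) = \Aut_{A_M}(Q)$. This is precisely the ``odd centralizing permutation collapses the two automorphism groups'' phenomenon you mentioned in passing, but the point is that saturation alone guarantees such an element exists---no block combinatorics required.
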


\begin{proof}
By Lemma \ref{symalt-idem-syl}, there is a subset $M'$ of $\{1,
\ldots, n\}$ such that $P$ is a Sylow $p$-subgroup of $A_{M'}$. Then
we clearly have $M \subseteq M'$, so $A_M \subseteq A_{M'}$, and
since $P \subseteq A_M$, $P$ is a Sylow $p$-subgroup of $A_M$.

Let $\F$ be the block fusion system on $P$. By Alperin's theorem, it
is enough to prove that either $\Aut_\F(Q) = \Aut_{S_M}(Q)$ for
every centric subgroup $Q$ of $P$, or $\Aut_\F(Q) = \Aut_{A_M}(Q)$
for all these groups.

Let $N$ be the set of fixed points of $P$, so that $\{1, \ldots,
n\}$ is the disjoint union of $M$ and $N$. By Lemma
\ref{symalt-centric}, any centric subgroup of $Q$ has $N$ as its set
of fixed points. Then any element $x \in N_{A_n}(Q)$ satisfies $x(N)
= N$, so we may write $x = ab$ with $a \in S_M$ and $b \in S_N$ with
$a$ and $b$ either both even or both odd. Since $b$ centralizes $Q$,
$x$ and $a$ represent the same automorphism of $Q$, so we always
have $\Aut_\F(Q) \subseteq \Aut_{A_n}(Q) \subseteq \Aut_{S_M}(Q)$.

Suppose that at every $Q$ there is a unique Brauer pair at $Q$
associated to $e$. Then we have $\Aut_\F(Q) = \Aut_{A_n}(Q)$. If $N$
contains at least two elements, there is an element $b \in S_N$ of
odd order. Then for any $a \in S_M$, either $a$ or $ab$ is an
element of $G$, and $ab$ represents the same automorphism of $Q$ as
$a$. We then have $\Aut_{S_M}(Q) \subseteq \Aut_{A_n}(Q)$; combining
this with the earlier results, we get $\Aut_\F(Q) = \Aut_{S_M}(Q)$.
If $N$ consists of at most one element, $S_N$ is trivial, and we get
$N_{A_n}(Q) = N_{A_M}(Q)$. This implies $\Aut_\F(Q) = \Aut_{A_M}(Q)$
for all $Q$.

Now suppose that at every $Q$ there are two Brauer pairs $(Q,e_Q)$
and $(Q,e_Q')$ associated to $e$. Let $x \in N_{A_n}(Q,e_Q)$; since
$N_{A_n}(Q,e_Q) \subseteq N_{A_n}(Q)$, we may write $x = ab$ as
above. Then by Lemma \ref{alt-br}, $b$ is an even element of $S_N$,
so $a$ is an even element of $S_M$. This implies $\Aut_\F(Q)
\subseteq \Aut_{A_M}(Q)$. Conversely, conjugation by any element $a$
of $A_M$ fixes both Brauer pairs, since $a$ acts trivially on $N$,
so we also have $\Aut_{A_M}(Q) \subseteq \Aut_\F(Q)$. Hence we have
$\Aut_\F(Q) = \Aut_{A_M}(Q)$ for all $Q$.

Suppose now that $p = 2$ and we have found that $\Aut_\F(Q) =
\Aut_{S_M}(Q)$ for all $Q$. Let $P'$ be a Sylow 2-subgroup of $S_M$
containing $P$; then $P$ has index 2 in $P'$, so it is normal in
$P'$. Then $\Aut_{P'}(P)$ is a 2-subgroup of $\Aut_\F(P)$ containing
$\Aut_P(P)$, but since $\F$ is a saturated fusion system,
$\Aut_P(P)$ is a Sylow 2-subgroup of $\Aut_\F(P)$. Then we must have
$\Aut_{P'}(P) = \Aut_P(P)$. Since $P'$ properly contains $P$, we
find that $C_{P'}(P)$ must properly contain $C_P(P)$. That is, there
is an odd element $x \in P'$ that centralizes $P$.

Now let $Q$ be a centric subgroup of $P$; then $x$ centralizes $Q$.
Let $g \in S_M$ be an element representing an $\F$-automorphism of
$Q$; then $g$ and $gx$ are both elements of $S_M$ representing this
automorphism, and one of them is even. That is, any
$\F$-automorphism can be represented by an element of $A_M$, so we
have $\Aut_\F(Q) = \Aut_{A_M}(Q)$.
\end{proof}

\begin{cor}\label{alt-fusion-cor}
Let $e$ be a block idempotent of $A_n$, let $P$ be a defect group of
$e$, and let $\F$ be the block fusion system on $P$. Then there is a
subset $L$ of $\{1, \ldots, n\}$ such that $P$ is contained in $A_L$
and $\F = \F_P(A_L)$.
\end{cor}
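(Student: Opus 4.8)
The plan is to read off $L$ from Theorem \ref{alt-fusion}, which already identifies $\F$ as one of the two group fusion systems $\F_P(A_M)$ or $\F_P(S_M)$, where $M$ is the set of non-fixed points of $P$ and $P$ is a Sylow $p$-subgroup of $A_M$. If $\F = \F_P(A_M)$ there is nothing to prove: take $L = M$, so that $P \subseteq A_M = A_L$ and $\F = \F_P(A_L)$. The same choice works whenever $p = 2$, since Theorem \ref{alt-fusion} then forces $\F = \F_P(A_M)$. Thus the only genuine case is $\F = \F_P(S_M)$ with $\F_P(S_M) \neq \F_P(A_M)$; inspecting the proof of Theorem \ref{alt-fusion}, this occurs only when $p > 2$ and the fixed-point set $N = \{1,\dots,n\}\setminus M$ of $P$ contains at least two elements (if $|N| \le 1$ that proof yields $\F = \F_P(A_M)$).

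In this case I would produce $L$ by enlarging $M$ with two of the spare fixed points. Choose distinct $a,b \in N$ and put $L = M \cup \{a,b\}$. Since $p > 2$, the group $P$ consists of even permutations of $M$ and fixes $a$ and $b$, so $P \subseteq A_L$. Because $P$ has no fixed points in $M$, each $P$-orbit in $M$ has size a power of $p$ greater than $1$, so $|M|$ is divisible by $p$; as $p \ge 3$, neither $|M|+1$ nor $|M|+2$ is divisible by $p$, whence $v_p(|A_L|) = v_p(|A_M|)$ and $P$ is a Sylow $p$-subgroup of $A_L$ as well. Consequently $\F_P(A_L)$ is a saturated fusion system on $P$, and the corollary reduces to the equality $\F_P(A_L) = \F_P(S_M)$.

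I would prove this equality with Alperin's fusion theorem (Theorem \ref{alperin}): both sides being saturated fusion systems on $P$, it suffices to check $\Aut_{A_L}(Q) = \Aut_{S_M}(Q)$ for every centric subgroup $Q$ of $P$. The decisive input is Lemma \ref{symalt-centric}, by which such a $Q$ has no fixed points in $M$, so that its fixed points in $L$ are precisely $\{a,b\}$. Hence any $g \in N_{A_L}(Q)$ permutes $\{a,b\}$ and stabilizes $M$, giving $g = g_M g_{ab}$ with $g_M \in S_M$ and $g_{ab} \in S_{\{a,b\}}$ of equal parity; as $g_{ab}$ centralizes $Q$, the automorphism induced by $g$ equals that induced by $g_M \in N_{S_M}(Q)$, so $\Aut_{A_L}(Q) \subseteq \Aut_{S_M}(Q)$. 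Conversely, for $\sigma \in N_{S_M}(Q)$ the element $\sigma$ (when even) or $\sigma(a\,b)$ (when odd) lies in $A_L$, normalizes $Q$, and induces the same automorphism, giving the reverse inclusion. This parity-matching step is where I expect the main difficulty to lie, since it relies essentially on Lemma \ref{symalt-centric} to ensure that $a$ and $b$ are the only fixed points and that conjugation by $N_{A_L}(Q)$ cannot move points of $M$ into $\{a,b\}$. Once both inclusions are in place, Alperin's theorem gives $\F_P(A_L) = \F_P(S_M) = \F$, and $L = M \cup \{a,b\}$ is the required set.
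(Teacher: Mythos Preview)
Your proposal is correct and follows essentially the same route as the paper: apply Theorem~\ref{alt-fusion}, take $L=M$ in the $\F_P(A_M)$ case, and in the remaining case (necessarily $p>2$ with $|N|\ge 2$) enlarge $M$ by two fixed points and verify $\Aut_{A_L}(Q)=\Aut_{S_M}(Q)$ for centric $Q$ via the parity-matching argument, invoking Alperin's theorem. Your justification that $|N|\ge 2$ (by inspecting the case split inside the proof of Theorem~\ref{alt-fusion}) and your explicit valuation check that $P\in\Syl_p(A_L)$ are slightly more detailed than the paper's, but the structure is identical.
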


\begin{proof}
Let $M$ be the subset of $\{1, \ldots, n\}$ consisting of those
elements that are not fixed points of $P$. By Theorem
\ref{alt-fusion}, $P$ is then a Sylow $p$-subgroup of $A_M$, and
$\F$ is equal to either $\F_P(A_M)$ or $\F_P(S_M)$. If $\F =
\F_P(A_M)$ we set $L = M$ and are done, so consider the case $\F =
\F_P(S_M)$. In this case we have $p > 2$ and $\Aut_\F(P) =
\Aut_{S_M}(P)$. Then the complement of $M$ in $\{1, \ldots, n\}$
must contain at least two elements, since otherwise we would have
$\Aut_{A_n}(P) = \Aut_{A_M}(P)$, which is impossible since
$\Aut_\F(P) \subseteq \Aut_{A_n}(P)$. Let $L$ consist of $M$ plus
any two other elements of $\{1, \ldots, n\}$; since $p > 2$, $P$ is
then a Sylow $p$-subgroup of $A_L$. By the same arguments as in the
proof of Theorem \ref{alt-fusion}, we find that $\Aut_{A_L}(Q) =
\Aut_{S_M}(Q) = \Aut_\F(Q)$ for every centric subgroup $Q$ of $P$,
so that $\F = \F_P(A_L)$.
\end{proof}

\bibliographystyle{plain}
\bibliography{Library}

\end{document}